\newcommand{\X}{{\bm X}}
\newcommand{\x}{{\bm x}}
\newcommand{\Y}{{\bm Y}}
\newcommand{\y}{{\bm y}}
\newcommand{\ba}{{\bm a}}
\newcommand{\bu}{{\bm u}}
\newcommand{\V}{{\bm V}}
\newcommand{\bv}{{\bm v}}
\newcommand{\W}{{\bm W}}
\newcommand{\T}{{\bm T}}
\newcommand{\bS}{{\bm S}}
\newcommand{\B}{{\bm B}}
\newcommand{\w}{{\bm w}}
\newcommand{\e}{{\bm e}}
\newcommand{\A}{{\bm A}}
\newcommand{\E}{{\mathbb E}}
\newcommand{\R}{{\bm R}}
\newcommand{\PP}{{\bm P}}
\newcommand{\Q}{{\bm Q}}
\newcommand{\D}{{\bm D}}
\renewcommand{\P}{\mathbb{P}}
\newcommand{\coc}{\color{cyan}}
\newcolumntype{C}[1]{>{\Centering}m{#1}}
\newtheorem{theorem}{Theorem}[section]
\newtheorem{lemma}[theorem]{Lemma}
\newtheorem{corollary}[theorem]{Corollary}
\numberwithin{equation}{section}
\newtheorem{remark}[theorem]{Remark}
\newtheorem{assumption}{Assumption}[section]
\begin{document}

\begin{frontmatter}
\title{\large\bf Sampling without replacement from a high-dimensional finite population}
\runtitle{Sampling without replacement}

\begin{aug}
\author[A]{\fnms{Jiang} \snm{Hu}\ead[label=e1]{huj156@nenu.edu.cn}}
\author[B]{\fnms{Shaochen} \snm{Wang}\ead[label=e2]{mascwang@scut.edu.cn}}
\author[C]{\fnms{Yangchun}
\snm{Zhang}\ead[label=e3]{zycstatis@shu.edu.cn}\thanks{Corresponding author}}
\and
\author[D]{\fnms{Wang} \snm{Zhou}%
\ead[label=e4]{stazw@nus.edu.sg}%
\ead[label=u1,url]{}}

\address[A]{KLASMOE and School of Mathematics $\&$ Statistics, Northeast Normal University,
Changchun, 130024, P.R. China. \printead{e1}}

\address[B]{School of Mathematics,
South China University of Technology, Guangzhou, 510640, P.R. China.\printead{e2}}

\address[C]{Department of Mathematics, Shanghai University, Shanghai, 200444, P.R. China.\printead{e3}}

\address[D]{Department of Statistics and Data Science, National University of Singapore, 117546, Singapore.
\printead{e4}}

\runauthor{Hu et al.}

\affiliation{Some University and Another University}

\end{aug}

\begin{abstract}
  It is well known that  most of the existing theoretical results in statistics are based on the assumption that the sample is generated
  with replacement  from an infinite population.
  However, in practice, available samples are almost always collected without replacement.  If the population is  a finite set of real numbers, whether we can still safely use the results from  samples drawn without replacement becomes an important  problem. In this paper, we focus on the eigenvalues of  high-dimensional sample covariance matrices generated without replacement from finite populations. Specifically,
  we derive the Tracy-Widom laws for their largest eigenvalues and apply these results to parallel analysis. We provide new insight into the permutation methods proposed by Buja and Eyuboglu in [Multivar Behav Res. 27(4) (1992) 509--540]. 
  Simulation and real data studies  are conducted to demonstrate our results.
\end{abstract}

\begin{keyword}
\kwd{Largest eigenvalue; Tracy-Widom law; sample covariance matrix; finite population model; parallel analysis}
\end{keyword}

\end{frontmatter}
\section{Introduction}\label{section1}
Let
$
        {\mathbf U}_{p \times N}=(\bu^{\top}_1,\ldots,\bu^{\top}_p)^{\top}
$ be a finite population  matrix of  dimension $p$ and size $N$,
    where $\bu_i=(u_{i1},\ldots,u_{iN})$, $1\leq i\leq p$,  is the i-th row of ${\mathbf U}_{p \times N}$.
    Suppose that for each $i=1,\ldots,p$, ${\w}_i=(w_{i1},\ldots,w_{in})$  is generated by sampling without replacement from the $N$ coordinates of $\bu_i$ with equal probability.
    Then, the sample covariance matrix from the finite population is given by
    \begin{eqnarray}\label{al2}
    {\bS}_{n}=\frac{1}{n}\W_n\W_n^{\top},
    \end{eqnarray}
    where $\W_n=({\w}_1^{\top},\ldots,{\w}_p^{\top})^{\top}$. 
In this paper, our aim is to derive the asymptotic properties  of the  eigenvalues of ${\bS}_{n}$ in the large dimensional framework, that is, cases where the dimension and sample size proportionally tend to infinity.

    Sample covariance matrices have been extensively investigated since proposed by \cite{Wishart28G}. For fixed sample dimension cases,  one can see any multivariate statistical analysis (MSA) textbook (e.g., \citep{And}). However, when the dimension of  the sample  is large or even larger than the sample size, these classical theories  may no longer be applicable. 
   Since Marcenko and Pastur discovered the well-known global spectral distribution (MP law) in their seminal work \cite{MP}, there has been much literature devoted to the spectral property of the large-dimensional sample covariance matrix and its variants.  For the global properties of eigenvalues of sample covariance matrices, see \cite{Baibook}, where one can also find many references related to this topic.  For the individual eigenvalue, the most interesting
   one is the largest eigenvalue, since it accounts for as much of the variability in data
   as possible. The limiting distribution of  the largest eigenvalue of the large-dimensional sample covariance matrix was first studied and shown to follow the Tracy-Widom (TW) distribution by \cite{Johan} (complex Gaussian populations)
     and  \cite{Johnstone} (real Gaussian populations)
    under the assumption that all components of the population are independent and 
    identically distributed 
(i.i.d.). 
For the follow-up work, see \cite{Karoui,Ona,Baopanzhou,Lee}.
    
The coordinates of the population are linear combinations of independent random variables, which is a common condition in the above results. To relax this condition, many researchers have considered more general cases.
 For example, \cite{Bai08} considered the global spectral distribution of the sample covariance matrix without an independence structure in columns, \cite{Baopanzhou2} and \cite{PAY}  considered the TW law of the largest eigenvalue of the sample correlation matrix, \cite{GHPY} considered the linear spectral statistics of the sample correlation matrix, \cite{Bao} considered the TW law of the largest eigenvalue of the Kendall rank correlation matrix, \cite{Hu} and \cite{Wen} considered the linear spectral statistics and the largest eigenvalue of the sample covariance matrix, respectively, in elliptical distributions, and \cite{TZZ} considered the largest eigenvalue of the sample covariance matrix generated by VARMA.

    It is worth noting  that the sample covariance matrix from finite population ${\bS}_{n}$, which is a classical statistic in MSA,  is  a  kind of  sample covariance matrix without an independence structure. However, it does  not belong to any model presented in the above references. This is because (i) the entries of each row of $\W_n$  are correlated and (ii) the populations of different rows of $\W_n$ could be different.   Here we introduce Spearman's rank correlation  matrix  and the permutation matrix in parallel analysis (PA)  as  two examples  of our matrix model.

 \begin{itemize}
 	\item[(I):]
Spearman's rank correlation coefficient  is a classical nonparametric measure of  correlation  between two random variables. Suppose that we have $n$ i.i.d. samples $\Y_{1}, \ldots, \Y_{n}$, where $\Y_{k}$ is a $p$-vector consisting of i.i.d. random variables $Y_{1 k}, \ldots, Y_{p k}, k=1, \ldots, n .$ Spearman's rank correlation matrix
is defined by
\begin{align}\label{spear}
	\mathfrak{R}_{p}=\left(\mathfrak{r}_{k l}\right)_{1 \leq k, l \leq p},
	\end{align}
where $\mathfrak{r}_{k l}$ is Spearman's rank correlation coefficient between the $k$-th and $l$-th rows of $\left[\Y_{1}, \ldots, \Y_{n}\right]$.
Actually, $\mathfrak{R}_{p}$ can be expressed in the form of \eqref{al2} with $u_{ij}=\frac{\sqrt{12 }}{\sqrt{n^{2}-1}}(j-\frac{n+1}{2})$, $i=1,\dots,p$, $j=1,\dots,N$ and $n=N$.  See \cite{Bai08,BaoL15,Baospe} for more details.

%

\item[(II):]
PA  is a popular method that is used to select the number
    of factors in the factor model. PA is an applicable method because the largest singular value of the permuted data matrix decreases compared with that of the original value if there is indeed at least one factor (see \cite{DO,DE} for further explanation).
    Suppose the sample   matrix $\B=(b_{ij})_{p\times n}$ satisfies the signal-plus-noise model, i.e., $\B=\bS+\bm\Psi$, where $\bS$ is a $p\times n$ signal matrix of
    low rank and $\bm\Psi$ is a $p\times n$ noise matrix.
    In this case, the permutation matrix in \cite{DO,DE} turns into $\W$ of \eqref{al2} with $N = n$.
   \end{itemize}

We highlight  two main contributions of the present paper.
First, we theoretically investigate some spectral properties of  high-dimensional covariance matrices from finite populations, which have not attracted much attention in modern statistics. Second,
we practically characterize the asymptotic distribution of the largest eigenvalue of the high-dimensional standard permutation matrix in PA, which provides  a direct procedure without permutation.

The remainder of this paper is organized as follows. In Section \ref{mainresults}, we introduce the necessary notations and state the  main results, which include the global law, the edge universality and the fluctuation of the largest eigenvalue of the  sample covariance matrix ${\bS}_{n}$.
In Section \ref{section2}, we introduce
some applications of our main results in high-dimensional statistical inference to parallel analysis.
Related simulations and real data analyses are also conducted to demonstrate  our results.
Proofs of the main theorems and some preparatory lemmas are given  in Section \ref{TP}.   

\section{Notations and statements of main results}\label{mainresults}
\subsection{Main results}
To easily achieve our goal, we write ${\mathbf U}_{p \times N}$ as
\begin{eqnarray*}
{\mathbf U}_{p \times N}=\T^{\frac{1}{2}}{\mathbf V}_{p \times N},
\end{eqnarray*}
where $\T=\text {{\rm diag}}\{t_1,\ldots,t_p\}$ is a positive definite $p$-dimensional diagonal matrix and {${\mathbf V}_{p \times N}=(v_{jk})_{1\leq j\leq p,~1\leq k \leq N}$} is a $p\times N$ matrix. $\T$ is chosen such that the rows of ${\mathbf V}_{p \times N}$ are standardized. The rows of ${\mathbf U}_{p \times N}$ and ${\mathbf V}_{p \times N}$ are denoted as $\bu_1,\ldots, \bu_p$ and $\bv_1,\ldots, \bv_p$, respectively.
After scaling the observations ${\y}_i={\w}_i/\sqrt{n},~{\x}_i={\y}_i/\sqrt{t_i},~i=1,\ldots,p$ and setting $\X_n=({\x}_1^{\top},\ldots,{\x}_p^{\top})^{\top}=(x_{jk})_{1\leq j\leq p,~1\leq k \leq n}$, $\Y_n=({\y}_1^{\top},\ldots,{\y}_p^{\top})^{\top}=(y_{jk})_{1\leq j\leq p,~1\leq k \leq n}$, we can rewrite \eqref{al2} as
\begin{align}\label{syx} {\bS}_{n}=\Y_n\Y_n^{\top}&=\T^{\frac{1}{2}}\X_n\X_n^{\top}\T^{\frac{1}{2}}.
\end{align}
Henceforth, we denote $\lambda_p(\A)\leq \cdots \leq \lambda_1(\A)$ as the ordered eigenvalues of
a $p\times p$ Hermitian matrix $\A$. The empirical spectral distribution (ESD) of $\T$ is
\begin{eqnarray}\label{ESD}
H_n(\lambda):=\frac{1}{p}\sum_{j=1}^p {\mathbb I}_{\{\lambda_{j}(\T) \leq \lambda\}},\quad \lambda\in\mathbb{R}
\end{eqnarray}
and that of ${\bS}_{n}$ is
\begin{eqnarray*}
    F_n(\lambda):=\frac{1}{p}\sum_{j=1}^p {\mathbb I}_{\{\lambda_{j}({\bS}_{n}) \leq \lambda\}},\quad \lambda\in\mathbb{R}.
\end{eqnarray*}
Here and throughout the paper, ${\mathbb I}_{A}$ represents the indicator function of event $A$. The Stieltjes transform of $F_n$ is given by
\begin{eqnarray*}
     m_n(z)=\int \frac{1}{x-z}{\rm d}F_n(x),
\end{eqnarray*}
where $z=E+i\eta \in \mathbb{C}^{+}$.
    
In addition, we need a crucial parameter $\xi_+$ that satisfies
    \begin{eqnarray}\label{intro2}
      \int \left(\frac{\lambda\xi_+}{1-\lambda\xi_+}\right)^2{\rm d}H_n(\lambda)=p/n.
    \end{eqnarray}
    It is easy to check that the solution to \eqref{intro2} in $[0,1/\lambda_1(\T))$ is unique. 
    See \cite{Karoui} for more discussion of $\xi_+$.
Now, we state our main assumptions as follows.

    \begin{assumption}\label{ass1}
    \begin{itemize}
      \item[(i)] (On dimensionality) We assume
      \begin{eqnarray}\label{intro1}
    {c_n:=p/n\rightarrow c_0\in(0,\infty),~~y_n:=n/N\rightarrow y\in[0,1]}
    \end{eqnarray}
    as $n\to\infty$. For simplicity, we assume that there are
      two  positive  constants $c_1$, $C_1$ such that $c_1 <c_n <C_1$ for all $n\geq1$.
      \item[(ii)] (On $\T$) We assume that $\T$ is  diagonal with ${\liminf}_n \lambda_p(\T)>0$, ${\limsup}_n \lambda_1(\T)<\infty$,
      
      \begin{eqnarray}\label{laxi}
       \limsup_n \lambda_1(\T)\xi_+<1,
      \end{eqnarray}
        and  there exists a deterministic distribution $H$ such that $H_n \to H$ as $n\to \infty$.
        \item[(iii)]  (On $\V$)  We assume that the elements in ${\mathbf V}_{p \times N}$  satisfy   for each $j=1,\ldots,p$, $\sum_{k=1}^N v_{jk}=0$, $N^{-1}\sum_{k=1}^N v_{jk}^2=1$. 
        In addition, for any positive integer $ m$, there is a constant
      $C_m$ such that
      \begin{align}\label{mmoments}
       \frac{1}{N}\sum_{k=1}^N|v_{jk}|^m\leq C_m, \ j=1,\ldots,p.
      \end{align}
      \end{itemize}
    \end{assumption}
    \begin{remark}\label{rema12}
    We remark that the aim of Assumption \eqref{mmoments} is to make  moments of $\sqrt{n}x_{ij}$ bounded, i.e., $\mathbb E|\sqrt{n}x_{ij}|^m \leq C_m$, which justifies the large deviation of the strong MP law and the moments' comparison procedure. Details can be found in Section \ref{TP}.
    \end{remark}

    Our main results on the sample covariance matrix ${\bS}_{n}$ from a finite population are formulated as follows.
    First, we present the global law of ${\bS}_{n}$.

    \begin{theorem}\label{th1MP}
    Suppose that ${\bS}_{n}$ satisfies Assumption \ref{ass1}. 
    Then, as $n\to \infty$, $F_n$ almost surely converges  to a probability distribution $F_{c_0,H}$,  whose Stieltjes transform $m_{c_0,H}=m_{c_0,H}(z)$ is determined by
    \begin{eqnarray}\label{th11}
    m_{c_0,H}=\int \frac{1}{t(1-c_0-c_0zm_{c_0,H})-z}{\rm d}H(t).
    \end{eqnarray}
    \end{theorem}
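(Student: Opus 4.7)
The plan is the Stieltjes transform method: since pointwise almost sure convergence of $m_n(z)$ on $\mathbb{C}^+$ to the Stieltjes transform determined by \eqref{th11} is equivalent to the almost sure weak convergence of the corresponding empirical measures, it suffices to derive an approximate self-consistent equation for $m_n$ whose limit is \eqref{th11}. The one non-standard feature is that, although the rows of $\X_n$ are independent, the entries within a single row are mildly dependent through the without-replacement design; the strategy is therefore to expose the row independence via a row-wise resolvent decomposition rather than the usual column-wise Silverstein decomposition.

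The first step is a Schur-complement expansion. Removing the $i$-th row and column of $\bS_n - zI$ and using the push-through identity $A^\top(AA^\top - zI)^{-1}A = I + z(A^\top A - zI)^{-1}$ with $A = \T_{\hat i}^{1/2}\X_{\hat i}$ yields
\begin{equation*}
\bigl[(\bS_n - zI)^{-1}\bigr]_{ii} \;=\; -\frac{1}{z\bigl(1 + t_i\,\x_i^\top M_{\hat i}(z)\x_i\bigr)},\qquad M_{\hat i}(z) \;=\; \bigl(\X_{\hat i}^\top\T_{\hat i}\X_{\hat i} - zI_n\bigr)^{-1},
\end{equation*}
and the matrix $M_{\hat i}$ depends only on $\{\x_j : j\neq i\}$, so it is independent of $\x_i$.

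The second step replaces the quadratic form $\x_i^\top M_{\hat i}\x_i$ by a deterministic quantity. A direct calculation with the without-replacement design gives the covariance $\mathrm{Cov}(\x_i) = \frac{1}{n(N-1)}[N\I_n - \mathbf{1}_n\mathbf{1}_n^\top]$, so that $\E[\x_i^\top M_{\hat i}\x_i \mid M_{\hat i}] = n^{-1}\tr M_{\hat i}$ up to a finite-population correction of order $(N\eta)^{-1}$ arising from the rank-one $\mathbf{1}\mathbf{1}^\top$ term, which is negligible. Concentration of the quadratic form around its conditional mean is the technical heart of the argument: the classical Hanson-Wright inequality requires independent coordinates and fails here, so the required variance bound of order $n^{-1}\eta^{-2}$ must instead be produced either by an Efron-Stein / martingale estimate that exploits exchangeability of the without-replacement sample, or by direct moment comparison with an i.i.d.\ surrogate. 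Assumption \eqref{mmoments}, as emphasized in Remark \ref{rema12}, supplies exactly the uniformly bounded moments of $\sqrt{n}\,x_{ij}$ that such comparisons require. A rank-one interlacing bound then permits replacing $M_{\hat i}$ by $M := (\X_n^\top\T\X_n - zI_n)^{-1}$ at cost $O((n\eta)^{-1})$, and the identity $n^{-1}\tr M = c_n m_n(z) + (c_n - 1)/z$, valid because $\X_n^\top\T\X_n$ and $\bS_n$ share nonzero eigenvalues, closes the loop.

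Averaging the diagonal resolvent entries over $i = 1,\ldots,p$ and combining the estimates above produces, almost surely and uniformly on compact subsets of $\mathbb{C}^+$,
\begin{equation*}
m_n(z) \;=\; \int\frac{dH_n(t)}{t\bigl(1 - c_n - c_n z m_n(z)\bigr) - z} \;+\; o(1),
\end{equation*}
after which the standard Silverstein uniqueness argument for the Marchenko-Pastur fixed-point equation, together with $c_n\to c_0$ and $H_n\to H$, identifies the almost sure limit of $m_n(z)$ with $m_{c_0,H}(z)$. The main obstacle is the quadratic-form concentration in the absence of within-row independence: establishing it under only the polynomial moment hypothesis \eqref{mmoments}, rather than under sub-Gaussian tails, is precisely where the finite-population structure forces a departure from the standard MP proof and where Assumption \ref{ass1}(iii) earns its keep.
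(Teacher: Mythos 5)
Your proposal is correct, but it follows a genuinely different route from the paper's. The paper proves the global law by a short moment verification plus citation: it computes the mixed without-replacement moments $\E w_{i1}w_{i2}$, $\E w_{i1}^2w_{i2}w_{i3}$, $\E w_{i1}w_{i2}w_{i3}w_{i4}$ and $\E(w_{i1}^2-\E w_{i1}^2)(w_{i2}^2-\E w_{i2}^2)$, checks that they satisfy the hypotheses of Corollary~1.1 of Bai and Zhou (2008) (a framework for sample covariance matrices built from independent vectors with dependent coordinates), and invokes that result directly. You instead re-derive the self-consistent equation from first principles by a row-removal Schur complement, the push-through identity, and concentration of $\x_i M_{\hat i}\x_i^{\top}$. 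This is exactly the machinery the paper deploys \emph{later}, for the local law: the martingale-difference decomposition you correctly identify as the replacement for Hanson--Wright is carried out in Lemma~\ref{large deviation}, using the filtration $\mathcal{F}_l=\sigma\{x_1,\ldots,x_l\}$ and the fact that conditionally on $\mathcal{F}_l$ the remaining coordinates are exchangeable over the unused values; the moment hypothesis \eqref{mmoments} supplies exactly the input this needs, as you say. The paper's citation buys brevity and separates the soft global result from the harder edge analysis; your derivation buys self-containedness and upgrades to the local law with little extra work. All your intermediate identities --- the covariance $\mathrm{Cov}(\x_i)=\frac{1}{n(N-1)}[N\I_n-\mathbf{1}_n\mathbf{1}_n^{\top}]$, the $O\bigl((N\eta)^{-1}\bigr)$ rank-one correction, and the complementary trace identity $n^{-1}\tr M=c_n m_n+(c_n-1)/z$ --- check out (one notational slip: $\x_i$ is a row vector in this paper's conventions, so the quadratic form should be written $\x_i M_{\hat i}\x_i^{\top}$, not $\x_i^{\top} M_{\hat i}\x_i$).
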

    
    We further denote by $\Q:=n^{-1}\T^{\frac{1}{2}}\mathcal{X}\mathcal{X}^{\top}\T^{\frac{1}{2}}$, where $\mathcal{X}$ is a $p\times n$ data matrix with i.i.d. $N(0, 1)$ variables. Our main theorem on the edge universality of ${\bS}_{n}$  can be formulated as follows, and  its proof is deferred to Section \ref{TP}. Let
    \begin{eqnarray}\label{E+}
    	E_+=\frac{1}{\xi_+}\left(1+c_n^{-1}\int\frac{\lambda\xi_+}{1-\lambda\xi_+}{\rm d}H_n(\lambda)\right).
    \end{eqnarray}
    \begin{theorem}\label{comparison}
    Suppose that {Assumption \ref{ass1} } holds, and $N=n$.
     Then, there exist positive constants $\varepsilon$ and $\delta$ such that for any $s\in \mathbb{R}${\color{red},} 
    \begin{align*}
    &\mathbb P\left(n^{2/3}(\lambda_1(\Q)-E_+)\leq s-n^{-\varepsilon}\right)-n^{-\delta}\\
    \leq& \mathbb P\left(n^{2/3}(\lambda_1({\bS}_{n})-E_+)\leq s\right)\\
    \leq& \mathbb P\left(n^{2/3}(\lambda_1(\Q)-E_+)\leq s+n^{-\varepsilon}\right)+n^{-\delta},
    \end{align*}
    holds for sufficiently large $n$.
    \end{theorem}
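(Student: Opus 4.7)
The plan is to prove edge universality via the standard three-step program: establish a local Marchenko--Pastur law for $\bS_n$ at the optimal scale near the right edge $E_+$, deduce rigidity of the top eigenvalues, and then perform a Green-function (Lindeberg-type) comparison between $\bS_n$ and its Gaussian analogue $\Q$. Since $N=n$, each row of $\Y_n$ is $n^{-1/2}$ times a uniformly random permutation of a deterministic vector, and different rows are independent; this is the structural feature to exploit throughout.

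First I would establish a strong local law of the form
\begin{equation*}
\max_{i,j}\bigl|G_{ij}(z)-\Pi_{ij}(z)\bigr|\prec\sqrt{\frac{\mathrm{Im}\,m_n(z)}{n\eta}}+\frac{1}{n\eta}
\end{equation*}
for $z=E+i\eta$ in a spectral domain including the edge, where $G(z)=(\bS_n-zI)^{-1}$ and $\Pi(z)$ is the deterministic matrix predicted by the self-consistent equation underlying Theorem~\ref{th1MP}. The main difference from the i.i.d.\ case is that the Schur-complement identity for $G_{jj}$ produces the quadratic form $\y_j^{\top}G^{(j)}\y_j$, in which $\y_j$ is a permutation of a deterministic vector rather than a vector of independent entries. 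I would replace the usual Hanson--Wright concentration by a large-deviation bound for quadratic forms of sampling-without-replacement vectors, obtained via an exchangeable-pair/Stein coupling or via Serfling--Hoeffding-type inequalities. The moment bound~\eqref{mmoments} is precisely what is needed to iterate these bounds to the sharp $(n\eta)^{-1/2}$ scale. Standard consequences of the local law then give rigidity near the soft edge, in particular $|\lambda_1(\bS_n)-E_+|\prec n^{-2/3+\varepsilon}$, and analogous estimates for $\Q$ are provided by the i.i.d.\ theory.

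The final step is the comparison. I would represent $\P(n^{2/3}(\lambda_1-E_+)\le s)$ through a smoothed integral of $\mathrm{Im}\,\tr\, G(E+i\eta)$ (via a Helffer--Sj\"ostrand-type formula, in the Knowles--Yin style), and then interpolate between $\bS_n$ and $\Q$ by swapping the rows of $\Y_n$ one at a time: replace the permutation row $\y_j$ by a Gaussian row with matching first two moments. A resolvent expansion up to third order shows that the leading swap contributions cancel because the entrywise first two moments of $\y_j\y_j^{\top}$ agree for the two ensembles (zero mean, variance $n^{-1}$), while the third-order remainders are controlled by the local law of the previous step together with the moment bound~\eqref{mmoments}. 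The quantities $E_+$ and $\xi_+$ are identical for both ensembles since they depend only on $H_n$ and $c_n$, so no additional edge-matching step is required, and the two-sided inequality stated in the theorem follows.

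The principal obstacle is the quadratic-form concentration inside the local-law step: the within-row dependence of $\y_j$ rules out off-the-shelf tools, and pushing the concentration down to scale $\eta\sim n^{-1+\varepsilon}$ with the required stochastic domination demands careful bookkeeping of permutation statistics and iteration using all moments in~\eqref{mmoments}. Once this is in place, the Lindeberg swap and the final comparison inequality follow from now-standard edge-universality machinery.
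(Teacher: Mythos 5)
Your proposal captures the right high-level three-step program (local law via large-deviation estimates for permutation rows, rigidity, Green-function comparison), and the martingale/Serfling-style concentration idea for the quadratic forms $\y_j^{\top}G^{(j)}\y_j$ is essentially what the paper implements in its Lemma~\ref{large deviation} via the filtration $\mathcal{F}_l=\sigma\{x_1,\dots,x_l\}$ and Burkholder's inequality. However, there is one genuine gap in the comparison step: you propose to swap each permutation row $\y_j$ directly for a Gaussian row, claiming the ``entrywise first two moments of $\y_j\y_j^{\top}$ agree.'' They do not. For the without-replacement row one has $\mathbb{E}[y_{jk}y_{jl}]=-t_j/(n(n-1))$ for $k\neq l$, while for an i.i.d.\ Gaussian row this is $0$; moreover the third moments $\mathbb{E}[y_{jk}^3]\propto n^{-1}\sum_k v_{jk}^3$ need not vanish, unlike the Gaussian case. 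A one-shot Lindeberg swap to $\Q$ therefore does not have the leading-order cancellations you assert, and these mismatches are exactly what the paper's construction is built to avoid.

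The paper instead introduces the intermediate ensemble $\tilde{\bS}=\T^{1/2}\bm\Xi\bm\Sigma\bm\Xi^{\top}\T^{1/2}$ (equations~\eqref{large deviation5}--\eqref{large deviation8}), where $\bm\Xi$ has rows drawn i.i.d.\ \emph{with} replacement from the same finite population and $\bm\Sigma=\frac{n}{n-1}\bm I_n-\frac{1}{n-1}\e_n\e_n^{\top}$ re-installs the without-replacement covariance structure. This matches the marginal distribution of the entries exactly (hence \emph{all} one-dimensional moments, not just two) and matches the off-diagonal covariance, so the row-by-row Green-function comparison in Lemma~\ref{comlema1} between $\bS_n$ and $\tilde{\bS}$ is a genuine moment-matching swap with no leading-order mismatch. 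Only then does the paper invoke the existing literature (eigenvalue sticking from \cite{Blo} and \cite{KAY}) to transport the edge law from $\tilde{\bS}$ to the Gaussian reference $\Q$, a step that is standard because $\tilde{\bS}$ is a classical centered sample covariance matrix with independent columns. Your one-step comparison would need to account explicitly for the rank-one covariance perturbation and the nonzero third moments, which requires either an explicit moment-correction term in the resolvent expansion or an additional interpolating ensemble --- precisely the device the paper supplies and your outline omits.
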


    One typical application of Theorems \ref{th1MP} and \ref{comparison} lies in Spearman's correlation matrix \eqref{spear}. In this case, $\T$ is identity, and it is straightforward to verify that Assumption \ref{ass1}  holds. Thus, we have the following corollary, which was first derived in \cite{Baospe}.
  	\begin{corollary}\label{cor2.4}
  		Suppose that $c_n:=p/n\rightarrow c_0\in(0,\infty)$. Then, the ESD of Spearman's rank correlation matrix
  		$\mathfrak{R}_{p}$  almost surely  converges  to  the standard MP law, and
  		\begin{eqnarray*}
  			n^{\frac{2}{3}}\left(\frac{\lambda_1(\mathfrak{R}_{p})-\underline{E}_+}{\underline{\gamma}_0}\right)\Rightarrow TW_1,
  		\end{eqnarray*}
  	where $\underline{E}_+=(1+\sqrt{c_n})^2$ and $\underline{\gamma}_0=c_n^{-\frac{1}{6}}\underline{E}_+^{\frac{2}{3}}$. Here and 
in the sequel
,  $TW_1$ is  the type-1 TW distribution,  $``\Rightarrow"$ means convergence in distribution.
  	\end{corollary}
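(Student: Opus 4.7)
The corollary reduces to checking that Spearman's rank correlation matrix $\mathfrak{R}_p$ is a special instance of the finite-population sample covariance matrix $\bS_n$ studied in Theorems \ref{th1MP} and \ref{comparison}, and then to identifying the limits in the resulting equations.

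My first step would be to verify Assumption \ref{ass1} for the representation $u_{ij}=\sqrt{12/(n^2-1)}(j-(n+1)/2)$ with $N=n$, so that $\mathfrak{R}_p$ coincides with the $\bS_n$ of \eqref{al2}. Since the $j$-th population row has sample mean $0$ and sample second moment $1$ by design, the standardization in $\U=\T^{1/2}\V$ yields $\T=\I$, so $H_n=H=\delta_1$. Part (i) of the assumption follows from $c_n\to c_0$ and $y_n=n/N=1$. Part (iii) is immediate: the rows of $\V=\U$ are centered with unit second moment, and $|v_{jk}|\le \sqrt{12/(n^2-1)}\cdot n/2$ is uniformly bounded, so all $m$-th moment bounds hold trivially. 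For part (ii), since $H_n=\delta_1$, equation \eqref{intro2} reduces to $(\xi_+/(1-\xi_+))^2= n/p$ (after identifying the Karoui fixed-point equation), giving $\xi_+=1/(1+\sqrt{c_n})$ and hence $\lambda_1(\T)\xi_+=1/(1+\sqrt{c_n})$, which is uniformly bounded away from $1$ under $\liminf c_n>0$.

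Next, I would apply Theorem \ref{th1MP}. With $H=\delta_1$, equation \eqref{th11} simplifies to $c_0 z m^2+(z+c_0-1)m+1=0$, which is exactly the standard Marchenko--Pastur Stieltjes equation with aspect ratio $c_0$. Hence $F_n$ almost surely converges to the standard MP law, giving the first assertion of the corollary.

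For the largest eigenvalue, I would plug $\T=\I$ and $H_n=\delta_1$ into \eqref{E+}: a short calculation yields $E_+=(1/\xi_+)(1+c_n^{-1}\cdot\xi_+/(1-\xi_+))=(1+\sqrt{c_n})^2=\underline{E}_+$, so the centering in Theorem \ref{comparison} matches the one claimed. For the Gaussian comparison matrix $\Q=n^{-1}\mathcal{X}\mathcal{X}^{\top}$ with $\T=\I$, the classical Tracy--Widom limit of Johnstone (and its extension to all $c_0\in(0,\infty)$) says
\[
n^{2/3}\,\frac{\lambda_1(\Q)-(1+\sqrt{c_n})^2}{c_n^{-1/6}(1+\sqrt{c_n})^{4/3}}\Rightarrow TW_1,
\]
with $(1+\sqrt{c_n})^{4/3}=\underline{E}_+^{2/3}$, so the normalizing constant is exactly $\underline{\gamma}_0$. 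Finally, Theorem \ref{comparison} gives edge universality between $\lambda_1(\bS_n)=\lambda_1(\mathfrak{R}_p)$ and $\lambda_1(\Q)$ up to errors of order $n^{-\varepsilon}$ in location (negligible after dividing by $\underline{\gamma}_0\asymp 1$ and after $n^{2/3}$-scaling already applied), which transfers the $TW_1$ limit to $\lambda_1(\mathfrak{R}_p)$.

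There is really no hard step here; the only subtlety is a bookkeeping one, namely checking that the formula \eqref{E+} for $E_+$ and the resulting Tracy--Widom normalization collapse to the standard values $\underline{E}_+$ and $\underline{\gamma}_0$ when $\T=\I$. All other conditions of Assumption \ref{ass1} hold trivially for the deterministic, bounded standardized-rank population, so the corollary follows essentially as a direct specialization of Theorems \ref{th1MP} and \ref{comparison}.
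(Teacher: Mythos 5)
Your approach is exactly the one the paper intends: verify Assumption~\ref{ass1} with $\T=\I$ (so $H_n=\delta_1$), compute $\xi_+$ and $E_+$, apply Theorem~\ref{th1MP} for the MP law, and combine Theorem~\ref{comparison} with Johnstone's Tracy--Widom limit for the reference Wishart matrix $\Q$. The paper itself offers no details beyond ``it is straightforward to verify,'' so filling them in as you do is the right move.

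One small but real inconsistency in your write-up: you (correctly) obtain $\xi_+=1/(1+\sqrt{c_n})$, which forces $\xi_+/(1-\xi_+)=1/\sqrt{c_n}$, and you note that this corresponds to the fixed-point equation $\int(\lambda\xi_+/(1-\lambda\xi_+))^2\,dH_n(\lambda)=n/p$ rather than the $p/n$ printed in \eqref{intro2}. But you then plug into \eqref{E+} \emph{as printed}, with the $c_n^{-1}$ prefactor, and claim it gives $(1+\sqrt{c_n})^2$; with your $\xi_+$ that expression actually evaluates to $(1+\sqrt{c_n})(1+c_n^{-3/2})$, which is not $(1+\sqrt{c_n})^2$ unless $c_n=1$. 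The correct identity, obtained directly from \eqref{local law_z} at $z=E_+$, $\underline{m}=-\xi_+$, is $E_+=\xi_+^{-1}\bigl(1+c_n\int\tfrac{\lambda\xi_+}{1-\lambda\xi_+}\,dH_n(\lambda)\bigr)$, which indeed yields $(1+\sqrt{c_n})^2$; the same $c_n$ versus $c_n^{-1}$ reciprocal issue affects \eqref{intro2}, \eqref{E+} and \eqref{GAMMA0} as printed. So your conclusion and all final parameter values are right, but the chain of equalities you display for $E_+$ is not literally valid, and it is worth stating explicitly that you are using the edge relations derived from \eqref{local law_z} rather than the displayed \eqref{intro2}--\eqref{E+}.
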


%
%

    However, one cannot apply Theorem  \ref{comparison} directly in PA because in PA, the data matrix is random, and consequently, the condition $\sum_{k=1}^N v_{jk}=0$ is not satisfied. To overcome this problem, we consider modified versions of ${\bS}_{n}$, which fortunately work for all cases of $n\leq  N$.
    
    Now, we consider the general case of $n\leq  N$. We first introduce the spatial sign matrix. Let  $\bar{\Y}={\rm diag}\{\frac{1}{n}\sum_{l=1}^ny_{1l},\dots,\frac{1}{n}\sum_{l=1}^ny_{pl}\}\e_p\e_p^{\top}$,  $\bS_P=(\Y_n-\bar{\Y})(\Y_n-\bar{\Y})^{\top}$ and 
    $\D_2^{(n)}={\rm diag}\{(\bS_P)_{11},\dots,(\bS_P)_{pp}\}$. Here $\e_p$ represents the $p$-dimensional vector with elements all 1.  Then, the spatial sign matrix is defined as
    \begin{align}\label{Rn}
      \PP_n=(\D_2^{(n)})^{-\frac{1}{2}}\bS_P(\D_2^{(n)})^{-\frac{1}{2}}.
    \end{align}
    In addition, we provide an alternative assumption for  (iii) of {Assumption \ref{ass1}}; namely,
    \begin{itemize}
      \item[(iii')] {Assume that for any positive integer $m$, there is a constant 
      $C_m$ such that}
      \begin{align}\label{mmmm}
        \frac{1}{N}\sum_{k=1}^N|v_{jk}|^m\leq C_m, ~~~~j=1,\dots,p.
        \end{align}
Furthermore, we assume
      \begin{align}\label{mmmm1}
        \frac{1}{N}\sum_{j=1}^N\Big(v_{ij}-\frac{1}{N}\sum_{k=1}^Nv_{ik}\Big)^2\geq c>0,
      \end{align}
      for some constant $c$.
    \end{itemize}

    \begin{remark}
      Note that $\sum_{k=1}^N v_{jk}=0$ and $N^{-1}\sum_{k=1}^N v_{jk}^2=1$ in (iii) are no longer needed because of the standardization
      in the spatial-sign matrix.  The aim of the new condition \eqref{mmmm1} is   to bound the moments of $\sqrt{n}\frac{y_{ij}-\frac{1}{n}\sum_{k=1}^ny_{ik}}{\sqrt{(\bS_P)_{ii}}}$, i.e.,
      \begin{align*}
        \mathbb E\Big|\sqrt{n}\frac{y_{ij}-\frac{1}{n}\sum_{k=1}^ny_{ik}}{\sqrt{(\bS_P)_{ii}}}\Big|^m \leq C_m,
      \end{align*}
    which will be used   in the proof of Theorem \ref{N>n}.
    \end{remark}

    \begin{theorem}\label{N>n}

      In addition to (i) and (ii) of Assumption \ref{ass1}, we assume (iii') holds. We write $\Q_1:=\frac{1}{n}\mathcal{X}\mathcal{X}^{\top}$; then, there exist positive constants $\varepsilon$ and $\delta$ such that for any $s\in \mathbb{R}$,
      \begin{align*}
        &\mathbb P\left(n^{2/3}(\lambda_1(\Q_ 1)-\underline{E}_+)\leq s-n^{-\varepsilon}\right)-n^{-\delta}\\
        \leq &\mathbb P\left(n^{2/3}(\lambda_1(\PP_n)-\underline{E}_+)\leq s\right)\\
        \leq &\mathbb P\left(n^{2/3}(\lambda_1(\Q_1)-\underline{E}_+)\leq s+n^{-\varepsilon}\right)+n^{-\delta}
      \end{align*}
       holds for sufficiently large $n$. Here, $\underline{E}_+$ is defined in Corollary \ref{cor2.4}.
    \end{theorem}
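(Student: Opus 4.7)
\emph{Step 1: auxiliary idealized matrix.} The plan is to reduce $\PP_n$ to a standardized sample covariance matrix $\tilde{\bS}_n$ of the type covered by Theorem \ref{comparison}, and then to compare $\tilde{\bS}_n$ with the Wishart reference $\Q_1$. First introduce the population-standardized entries
\begin{equation*}
\tilde{u}_{ij} = \frac{u_{ij}-\bar{u}_i}{\sigma_i},\qquad \bar{u}_i = \frac{1}{N}\sum_{k=1}^N u_{ik},\qquad \sigma_i^2 = \frac{1}{N}\sum_{k=1}^N (u_{ik}-\bar{u}_i)^2.
\end{equation*}
Under (iii'), the lower bound (\ref{mmmm1}) gives $\sigma_i^2 \ge c > 0$, so the $\tilde{u}$'s verify Assumption \ref{ass1}(iii) together with the bounded-moment condition (\ref{mmoments}). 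Using the same without-replacement indices that produced $\W_n$, form $\tilde{\W}_n$ and set $\tilde{\bS}_n = n^{-1}\tilde{\W}_n\tilde{\W}_n^\top$. With $\T=\I_p$ the edge parameter $E_+$ of (\ref{E+}) equals $\underline{E}_+ = (1+\sqrt{c_n})^2$.

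\emph{Step 2: spectral closeness at the edge.} From $\bS_P = \Y_n(\I_n - n^{-1}\e_n\e_n^\top)\Y_n^\top$ one obtains the algebraic decomposition
\begin{equation*}
\hat{Y}_{ij} := \frac{y_{ij}-\bar{y}_i}{\sqrt{(\bS_P)_{ii}}} = \frac{\sigma_i}{s_i}\,\tilde{Y}_{ij} + \frac{\bar{u}_i-\sqrt{n}\bar{y}_i}{\sqrt{n}\,s_i},\qquad s_i = \sqrt{(\bS_P)_{ii}},
\end{equation*}
with $\tilde{Y}_{ij} = (w_{ij}-\bar{u}_i)/(\sqrt{n}\sigma_i)$ and $\PP_n = \hat{Y}\hat{Y}^\top$. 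Serfling-type large-deviation inequalities for sampling without replacement, combined with the moment bound (\ref{mmmm}), will give uniformly in $i\le p$
\begin{equation*}
|\sqrt{n}\bar{y}_i-\bar{u}_i| + |s_i-\sigma_i| = O\bigl(n^{-1/2+\varepsilon}\bigr)
\end{equation*}
with probability $1-O(n^{-\delta})$. Hence $\PP_n - \tilde{\bS}_n$ splits into a small diagonal similarity perturbation of $\tilde{\bS}_n$ plus a symmetric rank-at-most-two correction built from the row-mean vector and $\tilde{Y}\e_n$. Its operator norm is only $O(1)$, so Weyl's inequality will not suffice; instead the plan is to combine first-order eigenvalue perturbation with delocalization of the top eigenvector of $\tilde{\bS}_n$ (available from the isotropic local MP law) to show that each piece contributes only $O(n^{-1+\varepsilon})$ to $\lambda_1$, giving $|\lambda_1(\PP_n) - \lambda_1(\tilde{\bS}_n)| = o_{\mathbb{P}}(n^{-2/3})$.

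\emph{Step 3: universality for $\tilde{\bS}_n$.} Although Theorem \ref{comparison} is formally stated for $N=n$, its Green's function comparison argument uses only Assumption \ref{ass1}(iii) and (\ref{mmoments}), and when $\T=\I_p$ the limiting MP law of Theorem \ref{th1MP} is independent of $y=\lim n/N$. Thus the same argument applies verbatim to $\tilde{\bS}_n$ for any $n\le N$, yielding the Tracy--Widom comparison between $\lambda_1(\tilde{\bS}_n)$ and $\lambda_1(\Q_1)$ centered at $\underline{E}_+$. Chaining this with Step 2 will deliver the theorem.

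\emph{Principal obstacle.} The hardest step is Step 2: the naive operator-norm bound on $\PP_n - \tilde{\bS}_n$ is only $O(1)$, not $o(n^{-2/3})$, so Weyl's inequality is useless. Control at the Tracy--Widom scale must instead exploit the low-rank structure of the correction together with edge-eigenvector delocalization for $\tilde{\bS}_n$; this parallels the centering/normalization perturbation analyses carried out for sample correlation matrices in \cite{Baopanzhou2, PAY}.
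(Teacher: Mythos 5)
Your Step 1 and Step 2 decomposition is correct as algebra, but the proposal takes a genuinely different — and much harder — route than the paper, and there is a gap in Step 2 that you correctly flag but do not close.

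\emph{The clean route you are missing.} The sample-standardized entries
$\hat{Y}_{ij}=(y_{ij}-\bar y_i)/\sqrt{(\bS_P)_{ii}}$ depend only on the drawn multiset $\{w_{i1},\dots,w_{in}\}$, not on the remainder of the population. Condition on these multisets. Then each row of $\hat{Y}$ is a uniformly random ordering of a \emph{fixed} set of $n$ numbers with empirical mean $0$ and empirical variance $1/n$ after the $\sqrt{n}$ rescaling — i.e., $\PP_n$ conditionally \emph{is} a matrix of the form $\bS_n$ in \eqref{al2} with $N=n$ and $\T=\I_p$. Assumption~\ref{ass1}(iii) for this conditional population amounts to the moment bounds
$\mathbb E\big|\sqrt{n}(y_{ij}-\bar y_i)/\sqrt{(\bS_P)_{ii}}\big|^m\le C_m$,
which is precisely what (iii') is designed to give (see the remark following \eqref{mmmm1}). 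One then applies Theorem~\ref{comparison} conditionally and integrates out. No perturbation analysis, no extension of the $N=n$ machinery to $N>n$, no isotropic local law is required. Your detour through a population-standardized $\tilde{\bS}_n$ discards exactly this structure.

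\emph{Why Step 2, as written, does not close.} Because $\hat{Y}\e_n=0$, your decomposition collapses to $\PP_n=D\tilde{\bS}_nD-n\bm b\bm b^\top$ with $D=\mathrm{diag}(\sigma_i/s_i)$ and $b_i=(\bar u_i-\bar w_i)/(\sqrt{n}s_i)$. Under without-replacement sampling, $\mathbb E(\bar u_i-\bar w_i)^2=\tfrac{\sigma_i^2}{n}\cdot\tfrac{N-n}{N-1}$, so $n\|\bm b\|^2\to c_0(1-y)$, a strictly positive constant whenever $y<1$. Thus $\|n\bm b\bm b^\top\|=\Theta(1)$, and $\|D-\I\|\asymp n^{-1/2}$ gives $\|D\tilde{\bS}_nD-\tilde{\bS}_n\|\asymp n^{-1/2}$; both are far above the $n^{-2/3}$ Tracy--Widom scale. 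Your proposed fix — first-order eigenvalue perturbation plus edge eigenvector delocalization — does not suffice as stated, because $\bm b$ is random and built from the very same sampling noise as $\tilde{\bS}_n$, so coordinate delocalization of $\bm v_1$ says nothing directly about $\langle\bm v_1,\bm b\rangle$. Making this rigorous would require an isotropic local law applied conditionally on the sample sets, at which point you have effectively reconstructed the conditioning argument above. Finally, the claim in Step 3 that the $N=n$ proof of Theorem~\ref{comparison} "applies verbatim" for $n<N$ is also nontrivial: the martingale computations in Lemma~\ref{large deviation} (e.g., \eqref{large deviation12}) use $\sum_{k=1}^n x_k=\sum_{k=1}^N v_k=0$, an identity that fails when $n<N$, and the denominators change from $n-l$ to $N-l$; the paper deliberately avoids this by reducing to the $N=n$ case after conditioning.
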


 \begin{remark}\label{remk1}
    The above results can be generalized to the joint distribution of the largest several eigenvalues. More specifically, under the assumptions in Theorem \ref{comparison}, there exist positive constants $\varepsilon$ and $\delta$ such that for any $s_1,\ldots,s_k\in \mathbb{R}$,
    \begin{align*}
    &\mathbb P\left(n^{2/3}(\lambda_1(\Q)-E_+)\leq s_1-n^{-\varepsilon},\ldots,n^{2/3}(\lambda_k(\Q)-E_+)\leq s_k-n^{-\varepsilon}\right)-n^{-\delta}\\
    \leq&\mathbb P\left(n^{2/3}(\lambda_1({\bS}_{n})-E_+)\leq s_1,\ldots,n^{2/3}(\lambda_k({\bS}_{n})-E_+)\leq s_k\right)\\
    \leq&\mathbb P\left(n^{2/3}(\lambda_1(\Q)-E_+)\leq s_1+n^{-\varepsilon},\ldots,n^{2/3}(\lambda_1(\Q)-E_+)\leq s_k+n^{-\varepsilon}\right)+n^{-\delta}
    \end{align*}
    holds for sufficiently large $n$. Correspondingly, 
    $\lambda_1(\PP_n),\ldots,\lambda_k(\PP_n)$ have analogous properties under the conditions of Theorem \ref{N>n}.
    \end{remark}

    From Theorems \ref{comparison} and  \ref{N>n}, we have the following corollary on the largest eigenvalues of ${\bS}_{n}, \PP_n$.
    \begin{corollary}\label{TWS}
    Suppose that {Assumption \ref{ass1} } holds. Denoting
    \begin{eqnarray}\label{GAMMA0}
    \gamma_0^3=\frac{1}{\xi_+^3}\left(1+c_n^{-1}\int\left(\frac{\lambda\xi_+}{1-\lambda\xi_+}\right)^3{\rm d}H_n(\lambda)\right),
    \end{eqnarray}
    we have
    \begin{eqnarray*}
    n^{\frac{2}{3}}\left(\frac{\lambda_1({\bS}_{n})-E_+}{\gamma_0}\right)\Rightarrow TW_1.
    \end{eqnarray*}
   Additionally, under the same assumptions  in Theorem \ref{N>n}, we have
    \begin{eqnarray*}
      n^{\frac{2}{3}}\left(\frac{\lambda_1(\PP_n)-\underline{E}_+}{\underline{\gamma}_0}\right)\Rightarrow TW_1.
    \end{eqnarray*}
    \end{corollary}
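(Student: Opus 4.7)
The plan is to combine the two edge-universality comparison estimates, Theorems \ref{comparison} and \ref{N>n}, with the Tracy--Widom laws that are already known for Gaussian reference matrices. Theorem \ref{comparison} reduces the distribution of $\lambda_1(\bS_n)$, up to an $n^{-\varepsilon}$ perturbation and an $n^{-\delta}$ additive error, to that of $\lambda_1(\Q)$, where $\Q$ is the Gaussian sample covariance matrix with population covariance $\T$. Theorem \ref{N>n} analogously reduces $\lambda_1(\PP_n)$ to $\lambda_1(\Q_1)$, the largest eigenvalue of a standard real Wishart matrix. Thus it suffices to invoke the Gaussian edge limits: El Karoui's theorem for $\Q$ (valid precisely under condition \eqref{laxi} together with the boundedness of $\T$), which yields $n^{2/3}(\lambda_1(\Q)-E_+)/\gamma_0 \Rightarrow TW_1$ with the very constants $E_+$ of \eqref{E+} and $\gamma_0$ of \eqref{GAMMA0}; and Johnstone's classical theorem for real Wishart matrices, which gives $n^{2/3}(\lambda_1(\Q_1)-\underline{E}_+)/\underline{\gamma}_0 \Rightarrow TW_1$.

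To derive the first conclusion I would substitute $s \mapsto \gamma_0 s$ into the sandwich of Theorem \ref{comparison} and rewrite it as
\begin{align*}
&\P\!\left(n^{2/3}(\lambda_1(\Q)-E_+)/\gamma_0 \leq s - \gamma_0^{-1}n^{-\varepsilon}\right) - n^{-\delta} \\
&\qquad \leq\ \P\!\left(n^{2/3}(\lambda_1(\bS_n)-E_+)/\gamma_0 \leq s\right) \\
&\qquad \leq\ \P\!\left(n^{2/3}(\lambda_1(\Q)-E_+)/\gamma_0 \leq s + \gamma_0^{-1}n^{-\varepsilon}\right) + n^{-\delta}.
\end{align*}
Because \eqref{laxi} and the bounds on $\lambda_i(\T)$ force $\gamma_0$ to stay in a compact interval bounded away from zero (as can be read off from \eqref{GAMMA0} together with \eqref{intro2}), the two perturbations $\gamma_0^{-1}n^{-\varepsilon}$ tend to zero. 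The continuity of the Tracy--Widom cumulative distribution function $F_1$ and the Gaussian limit law for $\Q$ then drive both outer probabilities to $F_1(s)$, and a standard sandwich passage yields the first assertion. For the second assertion the argument is identical with Theorem \ref{N>n} replacing Theorem \ref{comparison} and Johnstone's theorem replacing El Karoui's, once one checks that specializing \eqref{intro2}--\eqref{GAMMA0} to $H_n$ concentrated at $1$ (the case $\T=\I$) gives $\xi_+ = 1/(1+\sqrt{c_n})$ and hence $E_+ = (1+\sqrt{c_n})^2 = \underline{E}_+$ and $\gamma_0 = c_n^{-1/6}\underline{E}_+^{2/3} = \underline{\gamma}_0$; this is a direct computation.

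The real work has already been absorbed into the two comparison theorems, so the main obstacle in the present corollary is merely to ensure that the Gaussian benchmark statement one cites is the correct one for the real symmetric ensemble with the exact normalizing constants $E_+$ and $\gamma_0$ from \eqref{E+} and \eqref{GAMMA0}. Once this is identified, the only remaining care is to verify that the perturbation $n^{-\varepsilon}$ and the error $n^{-\delta}$ in Theorems \ref{comparison} and \ref{N>n} are negligible at the $n^{2/3}$ scale, which is immediate because $\gamma_0$ and $\underline{\gamma}_0$ are bounded away from zero under Assumption \ref{ass1}, and because $F_1$ is a continuous, strictly monotone distribution function on all of $\mathbb{R}$.
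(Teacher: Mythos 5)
Your overall strategy is correct and is the natural one: pass the sandwich inequalities of Theorems \ref{comparison} and \ref{N>n} through a change of variable $s\mapsto\gamma_0 s$ (resp.\ $\underline{\gamma}_0 s$), invoke the known Tracy--Widom limit for the Gaussian reference matrix $\Q$ (resp.\ $\Q_1$), observe that $\gamma_0,\underline{\gamma}_0$ are bounded above and away from zero under Assumption \ref{ass1} so the $n^{-\varepsilon}$ perturbations vanish, and use the continuity of $F_1$ to squeeze. That is essentially what has to happen, and for the second claim Johnstone's theorem for the white real Wishart ensemble $\Q_1=\frac1n\mathcal{X}\mathcal{X}^\top$ is the right benchmark. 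However, your citation for the first claim is wrong in a way that matters: El Karoui (2007) proves a $TW_2$ limit for \emph{complex} Gaussian sample covariance matrices. The reference matrix $\Q=n^{-1}\T^{1/2}\mathcal{X}\mathcal{X}^\top\T^{1/2}$ in Theorem \ref{comparison} is \emph{real} Gaussian with a general population $\T$, and the corollary asserts convergence to $TW_1$, not $TW_2$. The correct benchmark results are those for real Gaussian covariance matrices with general population, e.g.\ Lee and Schnelli (\cite{Lee}) or Bao, Pan and Zhou (\cite{Baopanzhou}), both already cited in the paper; El Karoui's paper is cited only for the discussion of the parameter $\xi_+$, not for the $TW_1$ limit.

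A secondary point: your sanity check ``$\xi_+=1/(1+\sqrt{c_n})$, hence $E_+=\underline{E}_+$'' does not follow from \eqref{intro2} and \eqref{E+} as literally stated. Taking $H_n=\delta_1$ in \eqref{intro2} with the right-hand side $p/n=c_n$ gives $\xi_+/(1-\xi_+)=\sqrt{c_n}$, i.e.\ $\xi_+=\sqrt{c_n}/(1+\sqrt{c_n})$, and then \eqref{E+} gives $E_+=(1+\sqrt{c_n})^2/c_n$, which agrees with $\underline{E}_+$ only when $c_n=1$. Your formula $\xi_+=1/(1+\sqrt{c_n})$ is the one consistent with the self-consistent equation \eqref{local law_z} (differentiating and setting $\underline{m}=-\xi_+$ gives $1=c_n\xi_+^2\int t^2(1-t\xi_+)^{-2}\,{\rm d}H_n$, i.e.\ the right-hand side of \eqref{intro2} should read $1/c_n$), so it appears there is a sign-of-exponent inconsistency between \eqref{intro2}, \eqref{E+}, \eqref{GAMMA0} on the one hand and \eqref{local law_z}, Corollary \ref{cor2.4} on the other. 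Your proof should at least flag which version of the constants it is using rather than silently asserting a ``direct computation'' that contradicts \eqref{intro2} as printed.
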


The proof of Theorem \ref{comparison} is deferred to Section \ref{TP}. The proofs of Theorem \ref{N>n} and Corollary \ref{TWS} are deferred to Section A.3 of Supplementary Material (\cite{Hu23}).

\subsection{Basic notions}\label{notionnn}
What truly pertains to our discussion in the sequel is the nonasymptotic version of $F_{c_0,H}$ (i.e., $F_{c_n,H_n}$),  which can be obtained by replacing $c_0$ and $H$ with $c_n$ and $H_n$ in $F_{c_0,H}$, respectively.  More precisely, $F_{c_n,H_n}$ is the corresponding distribution function of the Stieltjes transform $m_{c_n,H_n}(z) := m_{c_n,H_n}\in \mathbb{C}^+$ satisfying the following self-consistent equation:
\begin{eqnarray}\label{local law2}
m_{c_n,H_n}(z)=\int\frac{1}{t(1-c_n-c_nzm_{c_n,H_n}(z))-z}{\rm d}H_n(t).
\end{eqnarray}
Define an $n\times n$ matrix
\begin{eqnarray}\label{fs}
    \mathcal{S}_n=\Y_n^{\top}\Y_n,
\end{eqnarray}
which shares the same nonzero eigenvalues with $\bS_{n}$. Denoting the ESD of $\mathcal{S}_n$ as $\underline{F}_n$, we have$$
\underline{F}_n=c_nF_n+(1-c_n)\mathbb{I}_{[0,\infty)},
$$
which converges almost surely to
\begin{align}\label{local law_c}
\underline{F}_{c_0,H}=c_0F_{c_0,H}+(1-c_0)\mathbb{I}_{[0,\infty)}.
\end{align}
Then, the n-dependent version of \eqref{local law_c} is
\begin{align}\label{local law_c1}
\underline{F}_{c_n,H_n}=c_nF_{c_n,H_n}+(1-c_n)\mathbb{I}_{[0,\infty)}.
\end{align}
We also obtain
\begin{align}\label{local law_c2}
\underline{m}_{c_n,H_n}=c_nm_{c_n,H_n}-\frac{1-c_n}{z},
\end{align}
which satisfies the following self-consistent equation:
\begin{eqnarray}\label{local law_z}
z=-\frac{1}{\underline{m}_{c_n,H_n}}+c_n\int \frac{t}{1+t\underline{m}_{c_n,H_n}}\text{d}H_n(t).
\end{eqnarray}
For simplicity, we briefly use the notation
\begin{align*}
&\tilde{m}(z):=m_{c_n,H_n}(z),~~\tilde F:=F_{c_n,H_n},\\
&\tilde{\underline{m}}(z):=\underline{m}_{c_n,H_n}(z),~~ \underline{\tilde{F}}:=\underline{F}_{c_n,H_n}.
\end{align*}

From the discussions in \cite{SAC}, we know that $\underline{F}_{c_n,H_n}$ has a continuous derivative $\underline{\rho}_0$ on $\mathbb R \backslash \{0\}$. The inequality \eqref{laxi} guarantees that the density $\underline{\rho}_0$ exhibits
a square-root-type behavior at the rightmost endpoint of its support; see Theorem 3.1 in \cite{Baopanzhou}.
The rightmost boundary of the support of $\underline{\rho}_0$ is $E_{+}$ defined in \eqref{E+}, that is, $E_{+}=\inf\{x\in \mathbb R: \underline{\tilde{F}}(x)=1\}$. Moreover, the parameter $\xi_{+}$ defined by \eqref{intro2} satisfies $\xi_{+}=-\lim_{\eta \to 0} \tilde{\underline{m}}(E_{+}+i\eta)=-\tilde{\underline{m}}(E_{+})$.

\section{Applications and numerical studies}\label{section2}

\subsection{High-dimensional PA} As  Spearman's rank correlation matrix has been discussed in the unpublished paper by \cite{Baospe}, in this section, we only focus on the high-dimensional PA. 
   Recall the    signal-plus-noise  matrix  $\B=\bS+\bm\Psi$, and recall that
    $\bm\Psi$ has the following structure:
    $$
    \bm\Psi=\tilde{\T}^{\frac{1}{2}}\bm\Phi,
    $$
    where $\tilde{\T}={\rm diag}\{\tilde{t}_1,\ldots,\tilde{t}_p\}$ is a diagonal matrix with $\|\tilde{\T}\|$ and $\|\tilde{\T}^{-1}\|$ bounded in $p$, $\bm\Phi$ is a $p\times n$ collection of i.i.d. noise $\phi_{ij}$ satisfying
    \begin{align}\label{asuph1}
    \E \phi_{ij}=0,~~\E \phi_{ij}^2=1.
    \end{align}
     Moreover, we assume that every $\phi_{ij}$ has a uniform subexponential decay, that is, there exists a constant $\vartheta>0$ such that for $u>1$
    \begin{align}\label{asuph2}
    \P(|\phi_{ij}|>u)\leq \vartheta^{-1}\exp(-u^\vartheta).
    \end{align}
 
    Denote  $\bm S=(s_{ij})_{p\times n}$, and  assume that     for any positive integer $ m$, there exists a constant
     $C_m$ such that
    \begin{align}\label{asus}
    \mathbb E |s_{ij}|^m\leq C_m,~~1\leq i\leq p,~1\leq j\leq n.
    \end{align}
    
    Recall the standard factor model $\B=\bm H\bm \Lambda^{\top}+\bm \Psi$, where
    $\bm H$ is the $p \times r$ ($r$ is fixed) matrix containing the factor values $h_{ij}$ and $\bm \Lambda$ is the
    $n \times r$ factor loading matrix with entries $\lambda_{jk}$.
    The first term is the signal due to the factor component,  whose rank is at most $r$. Thus, the factor model falls into
    the signal-plus-noise framework.

    We start with a $p\times n$ data matrix $\B$ with measurements
    given by $b_{ij}$, $i=1,\ldots,p,~j =1,\ldots,n$.
    We generate a matrix $\B_{\pi}$ by separately permuting the entries
    in each row of $\B$. Here $\pi= (\pi_1,\pi_2,\ldots,\pi_p)$ is a permutation
    array, which is a collection of permutations $\pi_i$ of $1,\ldots,n$. The permutation
    $\pi_i$ permutes the $i$-th row of $\B$; hence, $\B_{\pi}$ has $(\B_{\pi})_{ij} = \B_{i\pi_i(j)}$ entries.

     \cite{BAE} suggested  selecting
     the first factor if the top singular
    value of $\B$ is larger than a fixed percentile of the top singular values of the permuted matrices. Here, one can
    use the median or the $95\%$ or $100\%$ percentiles. If the first factor is selected,
    then we select the second factor when the second largest singular value of
    $\B$ is larger than the second singular value of the permuted matrices. We
     repeat the same procedure until a factor is not selected. 

   We try to calculate the quantiles theoretically. Based on the testing strategy introduced by \cite{DE}, we consider an analogous strategy to compare the top singular
   value of $\B$ and a certain percentile of the top singular values of the permuted matrices
    $(\hat{\B}_1)_{\pi}$ or $(\hat{\B}_2)_{\pi}$, where
    $$
    \hat{\B_1}:=\B-\bar{\B}~~\text{and}~~
    \hat{\B_2}:=(\D_2^{(n)})^{-1/2}(\B-\bar{\B}).
    $$
    $\bar{\B}$ and $\D_2^{(n)}$ are obtained in the same way as similar matrices in the construction of the
    spatial sign matrix $\PP_n$ in \eqref{Rn}.
    
    Let $\varrho_n(\hat{\B}_{\ell})$ be the largest eigenvalue of $\frac{1}{n}(\hat{\B}_{\ell})_{\pi}(\hat{\B}_{\ell})_{\pi}^{\top}$ for a random permutation $\pi$, $\ell=1,2$.  As the number of all the permutations of $(\hat{\B}_\ell)_{\pi}$ is $(n!)^p$, the probability mass function of $\varrho_n(\hat{\B}_\ell)$ given $
    \B$ is
    $$
    \mathbb P\left(\varrho_n(\hat{\B}_{\ell})=\{\lambda_1(\frac{1}{n}(\hat{\B}_{\ell})_{\pi}(\hat{\B}_{\ell})_{\pi}^{\top}), \pi\in \Pi(\hat{\B}_{\ell})\}_k|\B\right)=\frac{1}{(n!)^p},~k=1,\ldots,(n!)^p,~\ell=1,2{\color{red},}
    $$
    where $\Pi(\hat{\B}_{\ell})$ represents the set of all permutations of $(\hat{\B}_{\ell})_{\pi}$ and $\{A\}_k$ denotes the $k$-th element in a set $A$.
  For $\varrho_n(\hat{\B}_{\ell})$, we have the following corollary, whose proof is deferred to the Supplementary Material.

    \begin{corollary}\label{sta}
     Suppose that (i) and (ii) of Assumption \ref{ass1} hold when $\T$ is replaced by $(\D_2^{(n)})^{-1}$. Moreover, assume that \eqref{asuph1}, \eqref{asuph2} and \eqref{asus} hold.
      Then, we have that  
      \begin{align}\label{b2}
      n^{\frac{2}{3}}\left(\frac{\varrho_n(\hat{\B}_2)-\underline{E}_+}{\underline{\gamma}_0}
      \right)\Rightarrow TW_1,
     \end{align}
     and
     \begin{align}\label{b1}
     n^{\frac{2}{3}}\left(\frac{\varrho_n(\hat{\B}_1)-E_{+}}{\gamma_0}
     \right)\Rightarrow TW_1,
     \end{align}
    where $\underline{E}_+$ and $\underline{\gamma}_0$ are defined in Corollary \ref{cor2.4}, $E_{+}$ and ${\gamma_0}$ are obtained by \eqref{E+} and \eqref{GAMMA0}, respectively, when $\T$ is replaced by $(\D_2^{(n)})^{-1}$.
\end{corollary}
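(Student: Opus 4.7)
The central observation is that, conditionally on $\B$, the matrix $(\hat{\B}_{\ell})_{\pi}$ is obtained by sampling each row without replacement with $N=n$ from the finite population consisting of the rows of $\hat{\B}_{\ell}$. This places us within the framework of the paper's main theorems, provided that the finite population (which is now random, depending on $\B$) satisfies the required assumptions almost surely.

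For \eqref{b1}, the matrix $\frac{1}{n}(\hat{\B}_1)_{\pi}(\hat{\B}_1)_{\pi}^{\top}$ has exactly the structure of the sample covariance matrix ${\bS}_{n}$ from Theorem \ref{comparison}, once the finite population is identified with $\hat{\B}_1$ and $\T$ is identified via $\D_2^{(n)}$; the plan is to apply Theorem \ref{comparison} (equivalently Corollary \ref{TWS}) conditionally on $\B$. For \eqref{b2}, I will first observe that, since permutation commutes with row-centering and preserves row-wise squared sums, $\frac{1}{n}(\hat{\B}_2)_{\pi}(\hat{\B}_2)_{\pi}^{\top}$ coincides with the spatial sign matrix $\PP_n$ built from the sample $\B_{\pi}$, with the sample-based $\D_2^{(n)}$ reducing to the population-based $\D_2^{(n)}$ by permutation invariance. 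This reduces \eqref{b2} to a conditional application of Theorem \ref{N>n} to the population $\B$.

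The main technical step, and the step I expect to be the principal obstacle, is verifying that Assumption \ref{ass1} holds almost surely over the randomness of $\B$. Part (i) is automatic and part (ii), with $\T$ replaced by $(\D_2^{(n)})^{-1}$, is a hypothesis of the corollary. What must be checked is condition (iii) in the setting of \eqref{b1} and condition (iii$^{\prime}$) in the setting of \eqref{b2}. To establish the moment bound $N^{-1}\sum_{k}|v_{jk}|^{m}\leq C_{m}$ and, for (iii$^{\prime}$), the lower variance bound $N^{-1}\sum_{j}(v_{ij}-N^{-1}\sum_{k}v_{ik})^{2}\geq c$, I plan to combine the subexponential decay \eqref{asuph2} of $\bm\Phi$ with the moment assumption \eqref{asus} on $\bS$ and the boundedness of $\tilde{\T}$. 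Specifically, I will show that $(\D_2^{(n)})_{ii}=n^{-1}\sum_{j}(B_{ij}-\bar{B}_{i})^{2}$ concentrates around a value bounded away from $0$ (the noise alone contributes at least $\liminf\tilde{t}_{i}>0$) and bounded above. This simultaneously secures the variance lower bound in (iii$^{\prime}$) and controls the denominator in the standardization $v_{ij}=(B_{ij}-\bar{B}_{i})/\sqrt{(\D_2^{(n)})_{ii}}$, so that the empirical $m$-th moments of $v_{ij}$ inherit their bounds from those of $B_{ij}$ via a standard truncation-and-concentration argument.

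Finally, once the conditional Tracy--Widom convergence holds on an almost-sure event, the unconditional statements \eqref{b1} and \eqref{b2} follow by bounded convergence, using the continuity and boundedness of the $TW_{1}$ distribution function.
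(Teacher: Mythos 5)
Your proposal is correct and takes essentially the same route as the paper: condition on $\B$, identify each $(\hat{\B}_{\ell})_{\pi}$ as a full-permutation ($N=n$) sample from the finite population whose rows are those of $\hat{\B}_{\ell}$, reduce \eqref{b1} to Theorem~\ref{comparison}/Corollary~\ref{TWS} and \eqref{b2} to Theorem~\ref{N>n} after noting that $\frac{1}{n}(\hat{\B}_2)_{\pi}(\hat{\B}_2)_{\pi}^{\top}$ is the spatial-sign matrix of $\B_{\pi}$ (permutation commutes with row-centering and leaves $\D_2^{(n)}$ invariant), verify Assumption~\ref{ass1}(iii)/(iii$'$) conditionally, and integrate out the conditioning. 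The one place that deserves more precision is the verification step: it should be established that the row-wise empirical moment bounds and the variance lower bound hold on an $n$-dependent event of probability tending to $1$ sufficiently fast, so that the finite-$n$ sandwich inequalities of Theorems~\ref{comparison} and \ref{N>n} (rather than an a.s.\ limit) carry through the expectation over $\B$; this needs high-probability control of $n^{-1}\sum_j|s_{ij}|^m$ as well, not only the subexponential tails of $\bm\Phi$.
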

\begin{remark}
In simulation, $\underline{E}_+$ and $\underline{\gamma}_0$ are reassigned to $((\sqrt{n-1}+\sqrt{p})/\sqrt{n})^2$ and $(\sqrt{n-1}+\sqrt{p})(1/\sqrt{n^2(n-1)}+1/\sqrt{n^2p})^{1/3}$ respectively, which are obtained 
by Theorem 1.1 in \cite{Johnstone}. This is due to: \begin{enumerate}[i]
  \item  Simulations behave better, especially for small $n,p$. \item The limiting distribution does not change. \end{enumerate}
\end{remark}
From this corollary, we know that for any data matrix $\B$ shifted by the mean matrix $\bar{\B}$, 
the normalized largest eigenvalue of its permuted matrices tends to follow the TW law.  Note that for a data matrix $\B$, the auxiliary matrix $\D_2^{(n)}$ is known, and thus the parameters $E_+$ and $\gamma_0$ can be directly obtained. 
For illustration, we generate the data matrix $\B=\bm \Psi$, where 
$\tilde{\T}={\bm I}_p$, $\bm \Phi$ is a matrix consisting of i.i.d. standard Gaussian random variables. The dimensional settings are $p = 400, 800, 1600, 3200, 4800$
and the sample size is $n = 3p/2$. 
Each $\B$ is permuted $5,000$ times. The empirical quantiles of $\varrho_n(\hat{\B}_2)$ of the permuted matrices are provided in Table \ref{tablenew}. From these results, we can find that although there is distortion for the quantiles when $p$ and $n$ are small, this distortion fades away as $p$ and $n$ increase. This indicates that in practice, if the data at hand are centralized, the critical values obtained by PA can be estimated by the quantiles of the TW distribution when $p$ and $n$ are large.  
This will eliminate the massive permutation computations, while also getting relatively accurate quantiles. We will discuss the asymptotic distribution of the largest eigenvalue of permuted sample covariance matrices
without centralization, as well as other parallel analysis methods in Section \ref{conclusion}.
\begin{table}
      \centering
      \caption{The percentiles of the normalized largest eigenvalues for permuted matrices}
      \begin{tabular}{c c c c c c c r}\toprule
      Percentile&TW law&$p=400$&$p=800$&$p=1600$&$p=3200$&$p=4800$\\\cmidrule(lr){1-1}\cmidrule(lr){2-2}\cmidrule(lr){3-3}\cmidrule(lr){4-4}\cmidrule(lr){5-5}\cmidrule(lr){6-6}\cmidrule(lr){7-7}
      $5\%$ &-3.1880&-3.4281&-3.3621&-3.3511&-3.3664&-3.3108\\ 
      $50 \%$ &-1.2680 & -1.6086&-1.5405&-1.4960&-1.4475&-1.3835\\
      $95\%$ &0.9765& 0.5968& 0.6401&0.7339& 0.8246& 0.8795\\
      \bottomrule
      \end{tabular}
      \label{tablenew}
    \end{table}

  \subsection{Real data 
example
}\label{realdata}
    We consider the HGDP dataset (e.g., 
    \cite{Li} and \cite{DO}). The purpose of collecting this dataset is to evaluate the diversity in the patterns of genetic variation across the globe.
    We use the Centre d'Etude du Polymorphisme Humain panel, in which SNP data were collected for 1043 samples representing 51 populations from Africa, Europe, Asia, Oceania and  America. The data can be obtained from http://www.hagsc.org/hgdp/data/hgdp.zip.

    We use the same processing pipeline as \cite{DO}. The setting can be found at github.com/dobriban/DPA.
    The dataset has $n=1043$ observations, and  $p=9730$ SNPs on chromosome 22, which are standardized. Thus we have a $p\times n$ data matrix $\B$.   
%
    Figure \ref{figure5} shows the numerical behavior of the limiting distribution of $\varrho_n(\hat{\B}_2)$ based on 5,000 independent permutations.
    We find that the numerical distribution fits the theoretical distribution (i.e., the TW distribution) comparatively well. 
    \begin{figure}[h]
      \centering
      \includegraphics[height=4cm,width=8cm]{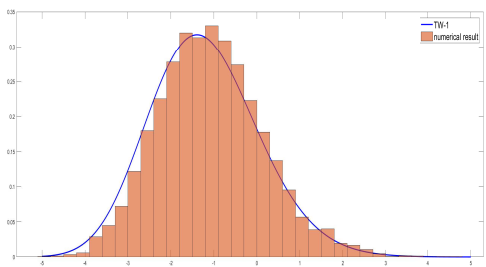}
      \caption{Numerical behavior of the permuted sample covariance matrix on the HGDP data.}
      \label{figure5}
    \end{figure}

\section{Technical proofs}\label{TP}

    \subsection{Proof strategy}
    The proof of Theorem \ref{comparison} is completed with the aid of a general strategy presented in \cite{KAY} for the covariance type matrices, which itself is an adaptation of the method originally described in \cite{ERD} for Wigner matrices. Generally, to prove the TW law for the largest eigenvalue, first, one needs to prove a local law for the spectral distribution, which controls the location of the eigenvalues on an optimal local scale. Second, with the aid of the local law, one needs to perform Green function comparison between the matrix of interest and a certain reference matrix ensemble, whose edge spectral behavior is already known. In \cite{ERD}, an extended criterion of the local law for the covariance type of matrices with independent columns (or rows) was given, see Theorem 3.6 in \cite{Pillai and Yin}. It allows one to relax the independence assumption on the entries within columns (or rows) to a certain extent, as long as some large deviation estimates
    hold for certain linear and quadratic forms of each column (or row) of the data matrix; see Lemma 3.4 in \cite{Pillai and Yin}. 

    Recall the sample covariance matrix ${\bS}_{n}$ defined in \eqref{al2}. Our main task is to show a large deviation estimate for each row of $\X_n$. Once the large deviation estimate is established, we can use the criterion in Proposition 5.1 in \cite{Wen} to conclude the weak local law of ${\bS}_{n}$ and Lemma 5.6 in \cite{KAY} for strong local law. In fact, Green function comparison can be completed similarly to the approach in \cite{ERD} by choosing an appropriate reference matrix ensemble. The reference matrix that is chosen in this paper is the traditional unbiased estimator of the population covariance matrix.

    \subsection{Notations}
    For two positive quantities $A_n$ and $B_n$ that depend on $n$, we use the notation $A_n\asymp B_n$ to mean that $C^{-1}A_n\leq B_n\leq CA_n$ for some positive constant $C>1$. We need the following probability comparison definition from \cite{ERD2}.

    Let $\mathcal{X}\equiv \mathcal{X}^{(N)}$ and $\mathcal{Y}\equiv \mathcal{Y}^{(N)}$ be two sequences of nonnegative random variables. We say that $\mathcal{Y}$ stochastically dominates $\mathcal{X}$ if, for all (small) $\epsilon>0$ and (large) $D>0$,
    \begin{eqnarray}\label{intro9}
    \mathbb{P}(\mathcal{X}^{(N)}>N^\epsilon\mathcal{Y}^{(N)})\leq N^{-D},
    \end{eqnarray}
    for sufficiently large $N>N_0(\epsilon,D)$, and we write $\mathcal{X}\prec\mathcal{Y}$ or $\mathcal{X}=O_\prec(\mathcal{Y})$. When $\mathcal{X}^{(N)}$ and $\mathcal{Y}^{(N)}$ depend on a parameter $v\in \mathcal{V}$ (typically an index label or a spectral parameter), then $\mathcal{X}\prec\mathcal{Y}$ uniformly in $v\in \mathcal{V}$, which means that the threshold $N_0(\epsilon,D)$ can be chosen independently of $v$.
    We use the symbols $O(\cdot)$ and $o(\cdot)$  for the standard big-O and little-o notations, respectively. We use $c$ and $C$ to denote strictly positive constants that do not depend on $N$. Their values may change from line to line. For any matrix $\A$, we denote $\|\A\|$ as its operator norm, while for any vector $\bm v$, we use $\|\bm v\|$ to denote its $L_2$-norm. In addition, we use $\ba_i$ to represent the $i$-th row of matrix $\A$.

  \subsection{Global law of $ S_n$}
  In this section, we prove Theorem \ref{th1MP} (Global law of $S^{(n)}$).
    \begin{proof}[Proof of Theorem \ref{th1MP}]
        When $\T$ is identity,
        \cite{Bai08} proved that $F_n$ is asymptotically given by the standard MP law. For general $\T$, we have
       \begin{align*}
       \mathbb E w_{ij}^2&=nt_i\mathbb E x_{ij}^2=t_i, ~~
       \mathbb E w_{i1}w_{i2}=nt_i\mathbb E x_{i1}x_{i2}=-\frac{t_i}{N-1},
       \end{align*}
       \begin{align*}
       \mathbb E w_{i1}^2w_{i2}w_{i3}&=\frac{1}{N(N-1)(N-2)}\sum_{1\leq s\neq t\neq l\leq N}u_{is}^2u_{it}u_{il}\\
       &=\frac{2}{(N-1)(N-2)}(\sum_{s=1}^Nu_{is}^4/N)-\frac{N}{(N-1)(N-2)}t_i^2,\\
       \mathbb E w_{i1}w_{i2}w_{i3}w_{i4}&=\frac{1}{N(N-1)(N-2)(N-3)}\sum_{1\leq s\neq t\neq l\neq m\leq N}u_{is}u_{it}u_{il}u_{im}\\
       &=\frac{1}{(N-1)(N-2)(N-3)}(-6(\sum_{s=1}^Nu_{is}^4/N)+3Nt_i^2),\\
       \mathbb E (w_{i1}^2-\mathbb Ew_{i1}^2)(w_{i2}^2-\mathbb Ew_{i2}^2)&=\frac{1}{N(N-1)}\sum_{1\leq s\neq t\leq N}u_{is}^2u_{it}^2-t_i^2\\
       &=\frac{1}{N-1}(t_i^2-\sum_{s=1}^Nu_{is}^4/N).
       \end{align*}
       Therefore, \eqref{th11} follows from Corollary 1.1 in \cite{Bai08}, which completes the proof of Theorem \ref{th1MP}.
    \end{proof}

    \subsection{Local law of $S_n$}
    In this section, our goal is to prove a strong
    local law for matrix ${\bS}_n$.  To achieve this goal, we first establish some large deviation estimates for certain linear and quadratic forms of $\x_i$,  which are rows of the matrix $\X_n$. 
    We now introduce a lemma on the stochastic domination of high-order moments.
    \begin{lemma}\label{le1}(Lemma 7.1 in \cite{Benaych})
    Suppose that the deterministic control parameter $\Phi$ satisfies $\Phi \geq n^{-C}$ for some constant $C>0$ and that for all $p$, there is a constant $C_p$ such that the nonnegative random variable $X$ satisfies $\mathbb E X^p \leq n^{C_p}$. Then, we have the following equivalence
    $$
    X\prec \Phi \Longleftrightarrow  \mathbb E X^n \prec \Phi^n
    $$
   for any fixed $n\in \mathbb N.$
    \end{lemma}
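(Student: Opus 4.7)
The plan is to prove this equivalence by establishing the two implications separately; the argument is a standard Chebyshev-type estimate that uses only the polynomial moment growth of $X$ and the lower bound $\Phi\geq n^{-C}$. To avoid a notational clash with the $n$ that indexes the sequence, let $k$ denote the fixed positive integer playing the role of the exponent (written $n$ in the statement of Lemma \ref{le1}).

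For the forward direction $X\prec \Phi \Rightarrow \E X^k \prec \Phi^k$, I would fix $\epsilon>0$ and split
\begin{equation*}
 \E X^k \;=\; \E\bigl[X^k\,\mathbb{I}_{\{X \leq n^\epsilon \Phi\}}\bigr] + \E\bigl[X^k\,\mathbb{I}_{\{X > n^\epsilon \Phi\}}\bigr].
\end{equation*}
The first summand is bounded deterministically by $n^{k\epsilon}\Phi^k$. For the second, Cauchy--Schwarz combined with the moment hypothesis $\E X^{2k}\leq n^{C_{2k}}$ and the tail estimate $\P(X>n^\epsilon\Phi)\leq n^{-D}$ (available for any $D>0$ from $X\prec \Phi$) yields an upper bound of order $n^{C_{2k}/2 - D/2}$. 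Since $\Phi^k\geq n^{-Ck}$, choosing $D$ sufficiently large (depending only on $C$, $C_{2k}$, $k$) makes this tail contribution much smaller than $\Phi^k$. As $\epsilon>0$ was arbitrary, the definition \eqref{intro9} yields $\E X^k \prec \Phi^k$.

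For the reverse direction, suppose $\E X^k \prec \Phi^k$ for every fixed $k\in\mathbb{N}$. Given $\epsilon,D>0$, I would invoke Markov's inequality at an order $k$ to be chosen:
\begin{equation*}
 \P\bigl(X > n^\epsilon \Phi\bigr) \;\leq\; \frac{\E X^k}{n^{k\epsilon}\Phi^k} \;\leq\; \frac{n^{\epsilon'}\Phi^k}{n^{k\epsilon}\Phi^k} \;=\; n^{\epsilon' - k\epsilon},
\end{equation*}
where the second inequality is the deterministic consequence $\E X^k \leq n^{\epsilon'}\Phi^k$ of $\E X^k \prec \Phi^k$ (valid for any $\epsilon'>0$ and all sufficiently large $n$, since $\E X^k$ is non-random). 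Choosing $\epsilon'<\epsilon$ and $k\geq (D+1)/\epsilon$ drives the right-hand side below $n^{-D}$, which is exactly $X\prec \Phi$.

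There is no deep conceptual obstacle; the argument is essentially bookkeeping. The two hypotheses that make everything go through are the polynomial moment control $\E X^m\leq n^{C_m}$ for all $m$, which prevents the Cauchy--Schwarz error in the forward direction from overwhelming the main term, and the lower bound $\Phi\geq n^{-C}$, which ensures that super-polynomially small errors are absorbed into any fixed power of $\Phi$. Dropping either hypothesis breaks the equivalence.
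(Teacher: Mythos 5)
Your proof is correct, and it is essentially the standard argument for this fact (a Cauchy--Schwarz split on $\{X\leq n^\epsilon\Phi\}$ for the forward direction, Markov at high order for the reverse); the paper itself supplies no proof, merely citing Lemma~7.1 of \cite{Benaych}, whose proof proceeds along the same lines. One small observation worth keeping in mind: as you correctly note, the hypothesis $\E X^k\prec\Phi^k$ for a \emph{single} fixed $k$ would not suffice for the reverse direction, since Markov at a fixed order gives only $\P(X>n^\epsilon\Phi)\lesssim n^{-k\epsilon+\epsilon'}$ and the exponent cannot be pushed past any given $D$; the statement must therefore be read, as you do, with the moment domination available for every fixed exponent, so that one is free to take $k\gtrsim D/\epsilon$.
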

    
    For simplicity, we set $\frac{1}{\sqrt{n}}\bv_i=(v_1,\ldots,v_n)$ and $\x_i=(x_1,\ldots,x_n)$ as the $i$-th row of $\frac{1}{\sqrt{n}} \V_{p \times n}$ and $\X_n$, respectively. We emphasize that in this subsection, $N=n$.
    \subsubsection{Large deviation estimates}
    \begin{lemma}\label{large deviation}(Large deviation lemma).
    For any deterministic vector $\A={A_i}\in \mathbb{C}_n$ and deterministic  matrix $\B={B_{ij}}\in \mathbb{C}_{n\times n}$, we have
    \begin{eqnarray}\label{large deviation1}
    |\sum_{i=1}^nx_iA_i|\prec(\|\A\|^2/n)^{1/2},
    \end{eqnarray}
    \begin{eqnarray}\label{large deviation2}
    |\sum_{i=1}^nB_{ii}x_i^2-n^{-1}{\rm Tr}\B|\prec n^{-1}(\sum_{i=1}^n|B_{ii}|^2)^{1/2},
    \end{eqnarray}
    \begin{eqnarray}\label{large deviation3}
    |\sum_{i\neq j}^nx_i B_{ij}x_j+\frac{1}{n(n-1)}\sum_{i \neq j}B_{ij}|\prec n^{-1}(\sum_{i\neq j}^n|B_{ij}|^2)^{1/2}.
    \end{eqnarray}
    \end{lemma}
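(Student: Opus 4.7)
The plan is to establish all three inequalities by the method of moments, using Lemma~\ref{le1} to convert $\mathbb{E}|Z|^{2m}\prec \Phi^{2m}$ into $|Z|\prec \Phi$. Since $N=n$ throughout this subsection, the row $\x_i$ is obtained by applying a uniform random permutation $\pi$ to the coordinates of $\frac{1}{\sqrt{n}}\bv_i=(v_1,\ldots,v_n)$, so that $x_k=v_{\pi(k)}$. Assumption~\ref{ass1}(iii) yields $\sum_{k=1}^n v_k=0$ and $\sum_{k=1}^n v_k^2=1$, while \eqref{mmoments} translates to $\sum_k|v_k|^m\leq C_m n^{1-m/2}$ for every fixed $m$; in particular $\mathbb{E}|\sqrt{n}\,x_i|^m\leq C_m$, which drives all the moment calculations.

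For each of the three statistics $Z$ appearing in \eqref{large deviation1}--\eqref{large deviation3}, the strategy is to expand $\mathbb{E}|Z|^{2m}$ as a multi-sum $\sum_{\mathbf{i}}A_{\mathbf{i}}\,\mathbb{E}[v_{\pi(i_1)}\cdots v_{\pi(i_s)}]$ and apply the identity
\[
\mathbb{E}\bigl[v_{\pi(i_1)}\cdots v_{\pi(i_s)}\bigr]=\frac{(n-r)!}{n!}\sum_{\substack{(j_1,\ldots,j_s)\\ j_a=j_b\ \text{iff}\ i_a=i_b}}v_{j_1}\cdots v_{j_s},
\]
where $r$ is the number of distinct entries of $\mathbf{i}$. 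The resulting sum is then organized by the partition $\sigma$ of $\{1,\ldots,s\}$ recording the equalities among $i_1,\ldots,i_s$. Every $\sigma$ possessing a singleton block contributes $0$ because $\sum_k v_k=0$, while the partitions with all blocks of size exactly two produce the variance-scale leading term by virtue of $\sum_k v_k^2=1$. For partitions with a block of size $\geq 3$, the bound $\sum_k|v_k|^m\leq C_m n^{1-m/2}$ produces an extra factor of $n^{-1/2}$ per unit of excess block size, more than enough to absorb the combinatorial loss. Summing over partition classes gives $\mathbb{E}|Z|^{2m}\prec \Phi^{2m}$ with $\Phi$ matching the right-hand sides of \eqref{large deviation1}--\eqref{large deviation3}, and Lemma~\ref{le1} concludes.

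Concretely, for \eqref{large deviation1} the centering is automatic from $\sum_k v_k=0$; alternatively one may invoke the convex-function reduction for sampling without replacement (Hoeffding) to dominate $\mathbb{E}|\sum A_i x_i|^{2m}$ by its i.i.d.\ analogue, which is already controlled by Lemma~3.4 of Pillai--Yin. For \eqref{large deviation2} one rewrites the statistic as $\sum_i B_{ii}(x_i^2-n^{-1})$ and repeats the partition analysis with $v_{\pi(i)}^2$ in place of $v_{\pi(i)}$; here $\sum_k v_k^2=1$ supplies the correct centering. For \eqref{large deviation3} one first subtracts the permutation mean $-\frac{1}{n(n-1)}\sum_{i\neq j}B_{ij}$, which accounts for the negative two-point correlation $\mathbb{E}[x_ix_j]=-\frac{1}{n(n-1)}$ for $i\neq j$, so that the centered bilinear form again has only leading contributions from partitions whose blocks all have size two.

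The hard part will be the partition bookkeeping for \eqref{large deviation3}. Because the bilinear statistic carries two free permutation indices per monomial, many non-trivial partition classes arise, and each must be matched against the centering constant while invoking the moment bound $\sum_k|v_k|^m\leq C_m n^{1-m/2}$. I expect to follow a diagrammatic scheme close to those of Bao--Pan--Zhou and Pillai--Yin, grouping partitions by block-size profile and exhausting the cases. Once this combinatorics is completed, the passage from moment bounds to stochastic dominance is uniform across all three inequalities via Lemma~\ref{le1}.
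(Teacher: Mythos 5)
Your plan is genuinely different from the paper's: the paper builds martingale differences $\mathcal{M}_l=\sum_j A_j\bigl[\mathbb{E}(x_j\mid\mathcal{F}_l)-\mathbb{E}(x_j\mid\mathcal{F}_{l-1})\bigr]$ with respect to the filtration that reveals the sampled coordinates one at a time, computes the conditional expectations explicitly using the without-replacement structure, and then closes via the Burkholder inequality before appealing to Lemma~\ref{le1}. You instead propose a direct high-moment expansion with partition bookkeeping. The martingale route has the advantage that it absorbs the negative dependence automatically (the conditional means $\mathbb{E}(x_j\mid\mathcal{F}_l)$ and the centering constants come out for free from the algebra), whereas a moment-partition argument for sampling without replacement requires tracking inclusion-exclusion corrections carefully.

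There is a genuine gap in your partition analysis. You claim that every partition $\sigma$ with a singleton block contributes $0$ because $\sum_k v_k=0$. This is the right heuristic for i.i.d.\ coordinates, but it is false for sampling without replacement, because in the identity $\mathbb{E}[v_{\pi(i_1)}\cdots v_{\pi(i_s)}]=\frac{(n-r)!}{n!}\sum_{\text{distinct }j}v_{j_1}\cdots v_{j_s}$ the sum is over \emph{distinct} tuples $(j_1,\ldots,j_s)$. Summing out a singleton-block index $j_a$ gives $\sum_{j_a\notin\{j_b:b\neq a\}}v_{j_a}=-\sum_{b\neq a}v_{j_b}$, which is not zero. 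Concretely, $\mathbb{E}[x_1x_2]=\frac{1}{n(n-1)}\sum_{j_1\neq j_2}v_{j_1}v_{j_2}=-\frac{1}{n(n-1)}$, which is precisely the source of the centering constant $-\frac{1}{n(n-1)}\sum_{i\neq j}B_{ij}$ in \eqref{large deviation3}. If singleton blocks really contributed nothing, that constant would be unexplainable. The corrections coming from the distinctness constraint are of size $O(n^{-1})$ per excess constraint, and they interact with the combinatorics at every order; your proposal does not account for them, so the claimed cancellation structure does not hold as stated. Separately, the Hoeffding convex-reduction alternative you mention is valid only for the linear statistic \eqref{large deviation1} (it needs a sum of univariate functions of the sampled values); it does not apply to the quadratic forms \eqref{large deviation2} and \eqref{large deviation3}, so it cannot be used to bypass the partition analysis where you most need it.
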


    \begin{proof}[Proof of Lemma \ref{large deviation}]
    The proof of Lemma \ref{large deviation} is very similar to the proof of Proposition 2.1 in \cite{Baospe}.
     
     By Chebyshev's inequality
    $$
    \mathbb P(|x_i|\geq n^{\epsilon}n^{-\frac{1}{2}})\leq \mathbb E \frac{|\sqrt{n}x_i|^q}{n^{q\epsilon}},
    $$
    and \eqref{mmmm}, we have $|x_i|=O_{\prec}(n^{-1/2})$.

    For \eqref{large deviation1}, we first construct a sequence of martingale differences. Define the filtration
    $$
    \mathcal{F}_0=\varnothing,~~~\mathcal{F}_l:=\sigma\{x_1,\ldots,x_l\},~~~l\in[1,n],
    $$
    and set
    \begin{eqnarray}\label{large deviation11}
    \mathcal{M}_l=\sum_{j=1}^nA_j[\mathbb E(x_j|\mathcal{F}_l)-\mathbb E(x_j|\mathcal{F}_{l-1})].
    \end{eqnarray}
    Conditioned on $\mathcal{F}_l$, $x_i$ is uniformly distributed for all $x_i\in \{x_{l+1},\ldots,x_n\}$ by sampling without replacement. Hence, we derive

    \begin{eqnarray}\label{large deviation12}
    \mathbb E(x_j|\mathcal{F}_l)= (\sum_{i=1}^n v_i-\sum_{i=1}^l x_i)/(n-l)=-\sum_{i=1}^l x_i/(n-l),~~~j\geq l+1,
    \end{eqnarray}
    where we used the fact that $\sum_{j=1}^nx_j=\sum_{j=1}^nv_j=0$.
    Furthermore, because $\mathbb E(x_j|\mathcal{F}_l)=x_j,~ j\in[1,l]$ and $\mathbb E(x_j|\mathcal{F}_{l-1})=x_j,~ j\in[1,l-1]$, $\mathcal{M}_l$ can be rewritten as follows:
    \begin{align}\label{large deviation13}
    \nonumber&\mathcal{M}_l
    =x_lA_l+\sum_{j=l+1}^nA_j \mathbb E(x_j|\mathcal{F}_l)-\sum_{j=l}^nA_j\mathbb E(x_j|\mathcal{F}_{l-1})\\
    \nonumber=&A_lx_l-x_l\frac{\sum_{j=l+1}^nA_j}{n-l}+A_l\frac{\sum_{k=1}^{l-1}x_k}{n-l+1}+\sum_{j=l+1}^nA_j\frac{-\sum_{k=1}^{l-1}x_k}{n-l}
    -\sum_{j=l+1}^nA_j\frac{-\sum_{k=1}^{l-1}x_k}{n-l+1}\\
    =&(A_l-\frac{1}{n-l}\sum_{j=l+1}^nA_j)(x_l+\frac{1}{n-l+1}\sum_{k=1}^{l-1}x_k).
    \end{align}

    Using the bound $|x_k|=O_{\prec}(n^{-1/2})$ and the fact that
    \begin{eqnarray}\label{buchong}
    |\sum_{k=1}^{l-1}x_k|\prec n^{-1/2} \min(l-1,n-l+1),
    \end{eqnarray}
    we deduce that
    \begin{eqnarray}\label{large deviation14}
    |\mathcal{M}_l|\prec n^{-1/2}(|A_l|+\frac{1}{n-l}\sum_{j=l+1}^n|A_j|).
    \end{eqnarray}
    By the Burkholder inequality, we derive that for any fixed positive integer $q>1$,
    \begin{eqnarray}\label{large deviation15}
      \mathbb E|\sum_{l=1}^n\mathcal{M}_l|^q\leq K_q \mathbb E(\sum_{l=1}^n\mathcal{M}_l^2)^{q/2},
    \end{eqnarray}
    where $K_q$ is finite depending on $q$. Now, we  estimate $\sum_{l=1}^n \mathcal{M}_l^2$. According to \eqref{large deviation14}, we have
    \begin{align}\label{large deviation16}
    \nonumber\sum_{l=1}^n\mathcal{M}_l^2=&\sum_{l=1}^{n-1}\mathcal{M}_l^2\prec n^{-1}\sum_{l=1}^{n-1}[\frac{1}{(n-l)^2}(\sum_{j=l+1}^n|A_j|)^2+|A_l|^2]\\
    \nonumber\prec& n^{-1}\sum_{l=1}^{n-1}[\frac{1}{(n-l)^2}(\sum_{j=l+1}^n|A_j|)^2+|A_l|^2]\prec n^{-1}\sum_{l=1}^{n-1}(\frac{1}{n-l}\sum_{j=l+1}^n|A_j|^2+|A_l|^2)\\
    \prec&n^{-1}\log n\|A\|^2\prec n^{-1}\|A\|^2.
    \end{align}
    Plugging \eqref{large deviation16} into \eqref{large deviation15}, using Lemma \ref{le1} and Markov's inequality, we can find \eqref{large deviation1}.

    For \eqref{large deviation2}, we construct a similar sequence of martingale differences as
    \begin{equation}\label{large deviation21}
    \mathcal{N}_l=\sum_{j=1}^nB_{jj}[\mathbb E(x_j^2|F_l)-\mathbb E(x_j^2|F_{l-1})].
    \end{equation}
    $\mathcal{N}_n=0$ since $\mathcal{F}_n=\mathcal{F}_{n-1}$. Conditioned on $\mathcal{F}_l$, $x_i$ is uniformly distributed for all $x_i\in \{x_{l+1},\ldots,x_n\}$. Hence,
    \begin{eqnarray}\label{large deviation22}
    \mathbb E(x_j^2|\mathcal{F}_l)= \frac{\sum_{i_=1}^n v_i^2-\sum_{i=1}^l x_i^2}{n-l}=\frac{1-\sum_{i=1}^l x_i^2}{n-l},~~~j\geq l+1.
    \end{eqnarray}
    Applying \eqref{large deviation22}, we deduce that
    \begin{align}\label{large deviation23}
    \nonumber\mathcal{N}_l=&B_{ll}x_l^2+\sum_{j=l+1}^nB_{jj}\mathbb E(x_j^2|\mathcal{F}_l)-\sum_{j=l}^nB_{jj}\mathbb E(x_j^2|\mathcal{F}_{l-1})\\
    =&(B_{ll}-\frac{1}{n-l}\sum_{j=l+1}^nB_{jj})(x_l^2+\frac{1}{n-l+1}(1-\sum_{k=1}^{l-1}x_k^2)).
    \end{align}
    Based on the fact that $|x_k|=O_{\prec}(n^{-1/2})$, we have
    $
    |\frac{1}{n-l}(1-\sum_{k=1}^{l-1}x_k^2)|\prec n^{-1}.
    $
    Thus,
    \begin{eqnarray}\label{large deviation24}
    |\mathcal{N}_l|\prec n^{-1}(|B_{ll}|+\frac{1}{n-l}\sum_{j=l+1}^n|B_{jj}|).
    \end{eqnarray}

    The remaining proof of \eqref{large deviation2} is the same as that for \eqref{large deviation1}. Thus, we omit the details.

    Finally, we prove \eqref{large deviation3}. Again, we first set
    \begin{eqnarray}\label{large deviation31}
    \mathcal{R}_l=\sum_{i<j}B_{ij}[\mathbb E(x_ix_j|\mathcal{F}_l)-\mathbb E(x_ix_j|\mathcal{F}_{l-1})].
    \end{eqnarray}
    Considering the different cases $i\leq l-1,~j=l$, $i\leq l-1,~j\geq l+1 $, $i=l,~ j\geq l+1$, $j>i\geq l+1$, and $i<j\leq l-1$, we rewrite $\mathcal{R}_l=\mathcal{R}_{l1}+\mathcal{R}_{l2}+\mathcal{R}_{l3}+\mathcal{R}_{l4}+\mathcal{R}_{l5}$, where
    \begin{align*}
    &\mathcal{R}_{l1}=\sum_{i=1}^{l-1}B_{il}x_i[x_l-\mathbb E(x_l|\mathcal{F}_{l-1})];~
    \mathcal{R}_{l2}=\sum_{i=1}^{l-1}\sum_{j=l+1}^{n}B_{ij}x_i[\mathbb E(x_j|\mathcal{F}_l)-\mathbb  E(x_j|\mathcal{F}_{l-1})];\\
    &\mathcal{R}_{l3}=\sum_{j=l+1}^{n}B_{lj}[\mathbb E(x_j|\mathcal{F}_l)x_l-\mathbb E(x_jx_l|\mathcal{F}_{l-1})];\\
    &\mathcal{R}_{l4}=\sum_{i=l+1}^{n}\sum_{j=i+1}^{n}B_{ij}[\mathbb E(x_ix_j|\mathcal{F}_l)-\mathbb E(x_ix_j|\mathcal{F}_{l-1})];\\
    &\mathcal{R}_{l5}=\sum_{i<j\leq l-1} B_{ij}[\mathbb E(x_ix_j|\mathcal{F}_l)-\mathbb E(x_ix_j|\mathcal{F}_{l-1})]=0.
    \end{align*}
    Applying \eqref{large deviation1} and setting $A=(\underbrace{1,\ldots,1}_l,\underbrace{0,\ldots,0}_{n-l})^{\top}$ and $A=(\underbrace{0,\ldots,0}_l,\underbrace{1,\ldots,1}_{n-l})^{\top}$, we have
    \begin{eqnarray}\label{large deviation32}
    |\sum_{j=1}^{l-1}x_j|=|\sum_{j=l}^{n}x_j|\prec n^{-1/2}\min\{\sqrt{l-1},\sqrt{n-l+1}\}.
    \end{eqnarray}
    Therefore, we have
    \begin{eqnarray}
      \mathbb E(x_j|\mathcal{F}_{l-1})\prec 1/\sqrt{n(n-l+1)},~~j\geq l.
    \end{eqnarray}
    Moreover, for $i,j\geq l$ and $i\neq j$, similar to \eqref{large deviation12} and \eqref{large deviation22}, we have
    \begin{align}\label{large deviation33}
    \nonumber&\mathbb E(x_ix_j|\mathcal{F}_{l-1})\\
    \nonumber=&\frac{1}{(n-l+1)(n-l)}\{\sum_{i\neq j}x_ix_j-\sum_{i,j\leq l-1,i\neq j}x_ix_j-2\sum_{i\leq l-1,j\geq l}x_ix_j\}\\
    \nonumber=&\frac{1}{(n-l+1)(n-l)}\{(\sum_ix_i)^2-\sum_ix_i^2-(\sum_{i\leq l-1}x_i)^2+\sum_{i\leq l-1}x_i^2\\
    \nonumber&-2\sum_{i\leq l-1}x_i(\sum_{j=1}^nx_j-\sum_{j\leq l-1}x_j)\}\\
    =&\frac{1}{(n-l+1)(n-l)}\{(\sum_{i\leq l-1}x_i)^2-(1-\sum_{i\leq l-1}x_i^2)\}.
    \end{align}
    In addition, we find that
    \begin{eqnarray}\label{large deviation34}
    |1-\sum_{i=1}^{l-1}x_i^2|\prec n^{-1} (n-l+1).
    \end{eqnarray}
    Thus, by \eqref{large deviation32}, \eqref{large deviation33} and \eqref{large deviation34}, we derive the following for $l\in[1,n]$.
    \begin{eqnarray}\label{large deviation35}
      \mathbb  E(x_ix_j|\mathcal{F}_{l-1})\prec n^{-1}(n-l)^{-1}.
    \end{eqnarray}
    By the Burkholder inequality, we obtain for any fixed positive integer $q>1$,
    \begin{eqnarray}\label{large deviation36}
      \mathbb E|\sum_{l=1}^n\mathcal{R}_l|^q\leq K_q \mathbb E(\sum_{l=1}^n\mathcal{R}_l^2)^{q/2},
    \end{eqnarray}
    and by the  Minkowski inequality, we have
    \begin{eqnarray}\label{large deviation366}
    (\mathbb E(\sum_{l=1}^n\mathcal{R}_l^2)^{q/2})^{2/q}\leq \sum_{l=1}^n(\mathbb E|\mathcal{R}_l|^q)^{2/q}.
    \end{eqnarray}
    Hence, bounding $\mathbb{E}|\mathcal{R}_{l\alpha}|^q$ for $\alpha=1,2,3,4$ is sufficient.

    For $\mathbb E|\mathcal{R}_{l1}|^q$,
    \begin{align}\label{large deviation37}
    \nonumber \mathbb E|\mathcal{R}_{l1}|^q=&\mathbb{E}(|\sum_{i=1}^{l-1}B_{il}x_i|^q|x_l-\mathbb{E}(x_l|\mathcal{F}_{l-1})|^q)\\
    \prec& n^{-q/2}\mathbb{E}|\sum_{i=1}^{l-1}B_{il}x_i|^q\prec n^{-q}(\sum_{i=1}^{l-1}|B_{il}|^2)^{q/2},
    \end{align}
    which is due to \eqref{large deviation1} and $x_i=O_{\prec}(n^{-1/2})$.

    For $\mathbb E|\mathcal{R}_{l2}|^q$, from \eqref{large deviation12} and \eqref{buchong}, when $l\leq n$, we have
    \begin{eqnarray}\label{large deviation38}
    |\mathbb E(x_j|\mathcal{F}_l)-\mathbb{E}(x_j|\mathcal{F}_{l-1})|=\frac{1}{n-l}|x_l+\frac{\sum_{j=1}^{l-1}x_j}{n-l+1}|\prec\frac{1}{(n-l)\sqrt{n}}.
    \end{eqnarray}
    Thus, we have
    \begin{align*}
    |\mathcal{R}_{l2}|\prec& \frac{1}{(n-l)\sqrt{n}}\sum_{j=l+1}^{n}|\sum_{i=1}^{l-1}B_{ij}x_i|
    \prec\frac{1}{(n-l)n}\sum_{j=l+1}^{n}(\sum_{i=1}^{l-1}|B_{ij}|^2)^{1/2}\\
    \prec&\frac{\sqrt{n-l}}{(n-l)n}(\sum_{j=l+1}^{n}\sum_{i=1}^{l-1}|B_{ij}|^2)^{1/2}\prec\frac{1}{n\sqrt{n-l}}(\sum_{j=l+1}^{n}\sum_{i=1}^{l-1}|B_{ij}|^2)^{1/2}.
    \end{align*}
    Hence, we have
    \begin{eqnarray}\label{large deviation39}
      \mathbb E|\mathcal{R}_{l2}|^q\prec n^{-q}(n-l)^{-q/2}(\sum_{j=l+1}^{n}\sum_{i=1}^{l-1}|B_{ij}|^2)^{q/2}.
    \end{eqnarray}

    Next, we  estimate $\mathbb{E}|\mathcal{R}_{l3}|^q$. Obviously, according to \eqref{large deviation32} and \eqref{large deviation35},
    \begin{eqnarray*}
    |\mathcal{R}_{l3}|\prec \sum_{j=l+1}^n|B_{lj}|/(n\sqrt{n-l})\prec (\sum_{j=l+1}^n |B_{lj}|^2)^{1/2}/n,
    \end{eqnarray*}
    In other words,
    \begin{eqnarray}\label{large deviation310}
      \mathbb E|\mathcal{R}_{l3}|^q \prec (\sum_{j=l+1}^n |B_{lj}|^2)^{q/2}/n^q.
    \end{eqnarray}

    Now, we estimate $\mathbb E|\mathcal{R}_{l4}|^q$. From \eqref{large deviation32}, \eqref{large deviation33} and \eqref{large deviation34}, we derive that for $i,j>l$,
    \begin{align*}
    &|\mathbb E(x_ix_j|\mathcal{F}_l)-\mathbb E(x_ix_j|\mathcal{F}_{l-1})|\\
    =&|\frac{1}{(n-l-1)(n-l)(n-l+1)}(\{(\sum_{i\leq l-1}x_i)^2-(1-\sum_{i\leq l-1}x_i^2)\})\\
    &+\frac{2}{(n-l)(n-l-1)}x_l\sum_{j=1}^{l}x_j|\prec\frac{1}{n(n-l)^{3/2}}.
    \end{align*}
    Hence, we obtain
    \begin{eqnarray}\label{large deviation311}
    \mathcal{R}_{l4}\prec \frac{1}{n(n-l)^{3/2}}\sum_{i=l+1}^{n}\sum_{j=i+1}^{n}|B_{ij}|\prec\frac{1}{n(n-l)^{1/2}}(\sum_{i=l+1}^{n}\sum_{j=i+1}^{n}|B_{ij}|^2)^{1/2}.
    \end{eqnarray}
    Therefore,
    \begin{eqnarray}\label{large deviation312}
      \mathbb E|\mathcal{R}_{l4}|^q\prec \frac{1}{n^q(n-l)^{q/2}}(\sum_{i=l+1}^{n}\sum_{j=i+1}^{n}|B_{ij}|^2)^{q/2}.
    \end{eqnarray}

    Finally, from \eqref{large deviation37}, \eqref{large deviation39}, \eqref{large deviation310} and \eqref{large deviation312}, we conclude that
    \begin{align*}
    \nonumber(\mathbb E|\mathcal{R}_l|^q)^{2/q}=&(\mathbb E(|\sum_{\alpha=1}^4\mathcal{R}_{l\alpha}|^2)^{q/2})^{2/q}\leq 4(\mathbb E(\sum_{\alpha=1}^4\mathcal{R}_{l\alpha}^2)^{q/2})^{2/q}\leq4\sum_{\alpha=1}^4(\mathbb E|\mathcal{R}_{l\alpha}^q|)^{2/q}\\
    \nonumber\prec& n^{-2}(\sum_{i=1}^{l-1}|B_{il}|^2+\sum_{j=l+1}^n |B_{lj}|^2\\
    \nonumber& +\frac{1}{n-l}\sum_{j=l+1}^{n}\sum_{i=1}^{l-1}|B_{ij}|^2+\frac{1}{n-l}\sum_{i=l+1}^{n}\sum_{j=i+1}^{n}|B_{ij}|^2)\\
    \prec& n^{-2}(\sum_{i=1}^{l-1}|B_{il}|^2+\sum_{j=l+1}^n |B_{lj}|^2+\frac{1}{n-l}\sum_{j=l+1}^{n}\sum_{i=1}^{j-1}|B_{ij}|^2).
    \end{align*}
    Since
    \begin{align*}
    &\sum_{l=1}^{n}\sum_{j=l+1}^{n}\sum_{i=1}^{j-1}\frac{1}{n-l}|B_{ij}|^2=\sum_{j=1}^{n}\sum_{i=1}^{j-1}(\sum_{l=1}^{j-1}\frac{1}{n-l})|B_{ij}|^2
    \leq C\log n(\sum_{j=1}^{n}\sum_{i=1}^{j-1})|B_{ij}|^2\prec\sum_{i<j}|B_{ij}|^2,
    \end{align*}
    we conclude that
    \begin{eqnarray*}
      \mathbb E|\sum_{l=1}^n\mathcal{R}_l|^q\prec n^{-q}(\sum_{i<j}|B_{ij}|^2)^{q/2}.
    \end{eqnarray*}
    Hence, by Markov's inequality, we have
    \begin{eqnarray}\label{large deviation4}
    |\sum_{i\neq j}x_i B_{ij}x_j-\mathbb E(\sum_{i\neq j}x_i B_{ij}x_j)|\prec n^{-1}(\sum_{i\neq j}|B_{ij}|^2)^{1/2}.
    \end{eqnarray}
    By the fact that $\mathbb{E}x_ix_j=-\frac{1}{n(n-1)},~i\neq j$,  
    Lemma \ref{large deviation} is concluded.
    \end{proof}

    Furthermore, for each $i=1,\ldots,p$, we let $\bm \xi_{i}=\{\xi_{i1},\ldots,\xi_{in}\}$ be a random vector with i.i.d. components that are uniformly distributed on $\bv_i$, i.e., $\mathbb P(\xi_{ik}=v_{ij})=n^{-1}$, $k=1,\ldots,n,~j=1,\ldots,n$. Note that $x_{ij}$'s are also identically distributed
    as $\xi_{ij}$'s, but $x_{ij}$'s are correlated. Set
    \begin{eqnarray}\label{large deviation5}
    \tilde{\x}_i=\bm \xi_i\bm \Sigma^{\frac{1}{2}},
    \end{eqnarray}
    where
    \begin{eqnarray}\label{large deviation6}
    \bm \Sigma=\frac{n}{n-1}\bm I_n-\frac{1}{n-1}\e_n\e_n^{\top}.
    \end{eqnarray}

    Let $\bm \Xi$ and $\tilde{\X}_n$ be the $p\times n$ matrices with $\bm {\xi}_i$ and $\tilde{\x}_i$ as their $i$-th rows, respectively. Introduce
    \begin{eqnarray}\label{large deviation7}
    \tilde{\bS}=\T^{\frac{1}{2}}\tilde{\X}_n\tilde{\X}_n^{\top}\T^{\frac{1}{2}}=\T^{\frac{1}{2}}\bm\Xi\bm\Sigma\bm\Xi^{\top}\T^{\frac{1}{2}}.
    \end{eqnarray}
    Similarly, set $\tilde{\y}_i=\sqrt{t_i}\tilde{\x}_i$, $\tilde{\Y}_n=(\tilde{\y}_1^{\top},\ldots,\tilde{\y}_p^{\top})^{\top}$ and  rewrite \eqref{large deviation7} as
    \begin{eqnarray}\label{large deviation8}
      \tilde{\bS}=\tilde{\Y}_n\tilde{\Y}_n^{\top}.
    \end{eqnarray}

    Observe that $\tilde{\bS}$ is the classical sample covariance matrix in statistics theory, which is centralized by sample mean and scaled by $n-1$, although in random matrix theory the simplified model $\T^{\frac{1}{2}}\bm\Xi\bm\Xi^{\top}\T^{\frac{1}{2}}$ is considered more often. Since $\T^{\frac{1}{2}}\bm\Xi\bm\Sigma\bm\Xi^{\top}\T^{\frac{1}{2}}$ is simply a rank one perturbation of $\frac{n}{n-1}\T^{\frac{1}{2}}\bm\Xi\bm\Xi^{\top}\T^{\frac{1}{2}}$, the empirical spectral distribution of $\tilde{\bS}$ almost surely also weakly converges to  a probability distribution, whose Stieltjes transform is given by \eqref{th11}.

    Observe that the entries of $\bm \Xi$ are i.i.d.. Then, using the large deviation estimates for linear and quadratic forms of the i.i.d. random variables (e.g., Corollary B.3 in \cite{ERD}), we see that \eqref{large deviation1},  \eqref{large deviation2} and \eqref{large deviation3} still hold if we replace $\x_i$ with $\tilde{\x}_i$.

    \subsubsection{Strong local law for $S_n$}
    
    We denote $\gamma_1\geq\cdots\geq \gamma_{p\wedge n}$ as the ordered $p$-quantiles of $F_{c_n,H_n}$, i.e., $\gamma_{j}$ is the smallest real number such that
    \begin{eqnarray}\label{local law1}
    \int_{-\infty}^{\gamma_{j}}{\rm d}F_{c_n,H_n}(x)=\frac{p-j+1}{p},~j=1,\ldots p\wedge n.
    \end{eqnarray}
   
    Recall matrix ${\bS}_{n}$ defined in \eqref{syx} and matrix $\tilde{\bS}$ defined in \eqref{large deviation8}. We introduce some intermediate matrices between ${\bS}_{n}$ and $\tilde{\bS}$. Starting from $\Y_n$, we replace $\y_i$'s with $\tilde{\y}_i$'s one by one and obtain a sequence of intermediate matrices
    \begin{eqnarray}\label{local law3}
    \Y_n(\Y_0),~~\Y_1,\ldots,\Y_l,\Y_{l+1},\ldots,\Y_{p-1},~~\tilde{\Y}_n(\Y_{p}).
    \end{eqnarray}
    Correspondingly, we set
    \begin{align}\label{local law4}
    \bS_l=\Y_l\Y_l^\top,~~&\bm G_l(z)=(\bS_l-z)^{-1},~~m_l(z)=\frac{1}{p}{\rm {\rm Tr}}\bm G_l(z)\\
    \mathcal{\bS}_l=\Y_l^{\top} \Y_l,~~&\mathcal{G}_l(z)=(\mathcal{\bS}_l-z)^{-1},~~\underline{m}_l(z)=\frac{1}{n}{\rm {\rm Tr}}\mathcal{G}_l(z),
    \end{align}
  
    When $l=0$, we have $\Y_0=\Y_n$, $m_0=m_n$; then, we simply write $\bS_0$; $\mathcal{\bS}_0$; $\bm G_0$; $\mathcal{G}_0$; $\underline{m}_0$ as $\bS$; $\mathcal{\bS}$ $\bm G$; $\mathcal{G}$; and $\underline{m}_n$.

    

     Furthermore, we introduce the following notation:
    
    \begin{eqnarray}\label{local law5}
    \Lambda_d=\max_{k,l}|\bm G_{kl}-\delta_{kl}\tilde{m}_k|,~~\Lambda_o=\max_{k\neq l}|\bm G_{kl}|,~~\Lambda=|m_n-\tilde{m}|=c_n^{-1}|\underline{m}_n-\tilde{\underline{m}}|,
    \end{eqnarray}
  
   where $\delta_{kl}$ denotes the Kronecker delta, i.e. $\delta_{kl}=1$
   if $k=l$, and $\delta_{kl}=0$ if $k\neq l$, and $\tilde{m}_k=\frac{1}{t_k(1-c_n-c_nz\tilde{m})-z}=-\frac{1}{z+z\tilde{\underline{m}}t_k}$. The last equality is derived by \eqref{local law_c2}.
    
    For a small positive $\bm c, \epsilon$ and sufficiently large $C_+$, $(C_+ > E_+)$, we define the domain
    \begin{eqnarray}\label{local law6}
    D(\bm c,\epsilon):=\{z=E+i\eta\in \mathbb C^{+}:E_+-\bm c\leq E\leq C_+,~n^{-1+\epsilon}\leq \eta\leq 1\}.
    \end{eqnarray}
    The following lemma on $\tilde{\underline{m}}(z)$ is elementary.
    \begin{lemma}\label{mn}(Theorem 3.1 in \cite{Baopanzhou}) Under Assumption 2.1, there is a $c>0$ such that for any $z=E+i\eta\in D(c,0)$
    \begin{eqnarray}
    &&|\tilde{\underline{m}}(z)|\asymp 1,\\
    &&{\rm {\rm Im}} \tilde{\underline{m}}(z)\asymp\left\{
                       \begin{array}{ll}
                         \sqrt{\kappa+\eta},&~~~~if~E\in [E_+-c,E_++\eta),\\
                         \frac{\eta}{\sqrt{\kappa+\eta}},&~~~~if~E\geq E_++\eta,
                       \end{array}
                     \right.\\
    && |1+t\tilde{\underline{m}}(z)|>0,~~~t\in[t_p,t_1],            
    \end{eqnarray}
    where $\kappa\equiv \kappa(E)=|E-E_+|$.
    \end{lemma}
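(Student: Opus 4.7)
The plan is to exploit the self-consistent equation \eqref{local law_z}, viewed as defining $z$ implicitly as a function of $\tilde{\underline{m}}$, and invert it near the right edge to produce a square-root singularity. Set $\phi(w) := -w^{-1} + c_n \int t(1+tw)^{-1}\,{\rm d}H_n(t)$, so that $z = \phi(\tilde{\underline{m}}(z))$. Then $\phi(-\xi_+) = E_+$, and the stationarity condition $\phi'(-\xi_+) = 0$ is essentially the characterization \eqref{intro2} of $\xi_+$. Assumption \ref{ass1}(ii), and particularly \eqref{laxi}, keeps $(1 - t\xi_+)^{-1}$ uniformly bounded for $t \in [t_p, t_1]$, so $\phi$ is smooth on a neighborhood of $-\xi_+$ and $\phi''(-\xi_+)$ is a nonzero real constant with a definite sign. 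Taylor expanding yields
$$z - E_+ \;=\; \tfrac{1}{2}\phi''(-\xi_+)\bigl(\tilde{\underline{m}}(z) + \xi_+\bigr)^2 + O\bigl((\tilde{\underline{m}}(z) + \xi_+)^3\bigr).$$

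The first main step is to invert this expansion by a standard complex-analytic argument, producing $\tilde{\underline{m}}(z) + \xi_+ = \alpha \sqrt{E_+ - z}\,(1 + o(1))$ for $z$ close to $E_+$, with $\alpha \neq 0$ real and the branch of $\sqrt{\cdot}$ chosen so that ${\rm Im}\,\tilde{\underline{m}}(z) > 0$ on $\mathbb{C}^+$. An elementary computation with $\sqrt{-\kappa \pm i\eta}$ then gives $|\sqrt{E_+ - z}| \asymp \sqrt{\kappa + \eta}$, with imaginary part of order $\sqrt{\kappa + \eta}$ when $E < E_+ + \eta$ and of order $\eta/\sqrt{\kappa + \eta}$ when $E \geq E_+ + \eta$. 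Substituting these into the expansion yields the claimed dichotomy for ${\rm Im}\,\tilde{\underline{m}}(z)$. The estimate $|\tilde{\underline{m}}(z)| \asymp 1$ is immediate, since $\tilde{\underline{m}}(E_+) = -\xi_+$ is bounded away from $0$ and $\infty$ by Assumption \ref{ass1}(ii) and the expansion keeps $\tilde{\underline{m}}(z)$ within an $O(\sqrt{|E_+ - z|})$ neighborhood of $-\xi_+$ on $D(c,0)$ for $c$ chosen small enough. For the third claim, observe that $1 + t\tilde{\underline{m}}(E_+) = 1 - t\xi_+ \geq 1 - \lambda_1(\T)\xi_+$, which is bounded away from zero by \eqref{laxi}, and the same perturbation argument preserves this positivity throughout $D(c,0)$.

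The main technical obstacle is that the Taylor inversion above is only local near $E_+$, whereas $D(c,0)$ extends up to $E = C_+$, well away from the edge. To close this gap I would patch the local analysis with a continuity-and-compactness argument: on the compact real interval $[E_+ + c', C_+]$ lying outside the support of $\underline{\rho}_0$ (for some fixed $c' > 0$), $\tilde{\underline{m}}$ extends continuously to the real axis and $|\tilde{\underline{m}}|$, $|1 + t\tilde{\underline{m}}|$ are uniformly bounded and bounded below by direct inspection of \eqref{local law_z}, while ${\rm Im}\,\tilde{\underline{m}}(E + i\eta) \asymp \eta$, which matches $\eta/\sqrt{\kappa+\eta}$ with $\kappa \asymp 1$. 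Uniformity in $n$ follows because the expansion coefficients $\xi_+$ and $\phi''(-\xi_+)$ are uniformly bounded and bounded away from $0$ by Assumption \ref{ass1}, allowing the neighborhood on which the inversion is valid to be chosen independently of $n$.
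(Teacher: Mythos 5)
Your proposal is essentially correct, and it reproduces the standard argument behind this result. A point worth noting up front: the paper does not prove this lemma itself --- it is quoted verbatim as Theorem~3.1 of Bao, Pan and Zhou (2015), and the hypothesis \eqref{laxi} is precisely what that reference imposes to ensure regularity of the right edge. Your sketch follows the same route that source uses: treat the self-consistent equation \eqref{local law_z} as $z=\phi(\tilde{\underline m})$, note that $-\xi_+$ is a critical point of $\phi$ with $\phi(-\xi_+)=E_+$ and $\phi''(-\xi_+)>0$, invert locally to get a square-root singularity, read off the imaginary part of $\sqrt{E_+-z}$ in the two regimes $E<E_++\eta$ and $E\geq E_++\eta$, and patch with a compactness argument for the portion of $D(c,0)$ bounded away from $E_+$. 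The identification of $\phi''(-\xi_+)$ with (a multiple of) $\gamma_0^3$ in \eqref{GAMMA0}, and of $1+t\tilde{\underline m}(E_+)=1-t\xi_+$ with the quantity controlled by \eqref{laxi}, are exactly the observations that make the argument uniform in $n$ under Assumption~\ref{ass1}(ii).

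Two minor points you could tighten. First, you assert $\phi''(-\xi_+)$ is a nonzero real constant with a definite sign; it is worth actually writing $\phi''(-\xi_+)=\tfrac{2}{\xi_+^3}\bigl(1+c_n\int(\tfrac{t\xi_+}{1-t\xi_+})^3\,{\rm d}H_n\bigr)$ and observing that every factor is positive precisely because $\xi_+>0$ and $1-t\xi_+\geq 1-\lambda_1(\T)\xi_+>0$ on the support of $H_n$, so nondegeneracy is not a separate assumption but a consequence of \eqref{laxi}. Second, the phrase ``$\alpha\neq 0$ real'' together with $\sqrt{E_+-z}$ needs a consistent branch convention so that ${\rm Im}\,\tilde{\underline m}>0$ on $\mathbb{C}^+$; with $\phi''(-\xi_+)>0$ the natural form is $\tilde{\underline m}(z)+\xi_+\sim\sqrt{2/\phi''(-\xi_+)}\,\sqrt{z-E_+}$ with the principal branch, which yields the same asymptotics. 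Neither issue affects the validity of the argument. (One thing you should not read too much into is the exact form of \eqref{intro2}: the stationarity condition $\phi'(-\xi_+)=0$ derived from \eqref{local law_z} gives $\int(\tfrac{\lambda\xi_+}{1-\lambda\xi_+})^2\,{\rm d}H_n=c_n^{-1}$ rather than the $p/n=c_n$ written in the paper, so the paper's displayed \eqref{intro2}, \eqref{E+}, \eqref{GAMMA0} appear to follow Karoui's opposite aspect-ratio convention or carry a consistent $c_n\leftrightarrow c_n^{-1}$ swap; your statement that stationarity is ``essentially'' \eqref{intro2} is right in spirit.)
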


    We further define the control parameter
   $
    \Omega\equiv\Omega(z)=\sqrt{\frac{{\rm {\rm Im}} \tilde{m}(z)}{n\eta}}+\frac{1}{n\eta}.$ We claim that the following local law holds.
    \begin{lemma}\label{lema2}
    Under {Assumption \ref{ass1}}, we have that for any
    sufficiently small $\epsilon>0$,

    \begin{enumerate}
      \item (Entrywise local law):$$ \Lambda_d(z)\prec\Omega(z)
          $$
          holds uniformly on $D(c,\epsilon)$.
      \item (Average local law): $$
          \Lambda(z)\prec\frac{1}{n\eta}
          $$
      holds uniformly on $D(c,\epsilon)$, where
      $c$ is the constant in Lemma \ref{mn}.
      \item (Rigidity on the right edge): For $i\in[1,\delta p]$ with any sufficiently small constant $\delta\in(0,1)$, we have
          $$
          |\lambda_i(\bS_{n})-\gamma_i|\prec n^{-\frac{2}{3}}i^{-\frac{1}{3}}.
          $$
    \end{enumerate}
    All the above hold if we replace ${\bS}_{n}$ with $\bS_l$ for all $l=1,\ldots,p$.
    \end{lemma}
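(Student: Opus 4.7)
The plan is to establish Lemma \ref{lema2} by following the general strategy for proving local laws of sample covariance matrices developed in \cite{Pillai and Yin}, \cite{Wen}, and \cite{KAY}, adapted to the finite-population setting. The critical input is already in hand: Lemma \ref{large deviation} provides the large deviation control on linear and quadratic forms of the rows of $\X_n$, which is precisely the ingredient that these general frameworks require in place of the row-independence hypothesis. A second crucial input is the stability of the self-consistent equation \eqref{local law_z} near the right edge, which follows from Lemma \ref{mn}.

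For the entrywise local law, I would apply the Schur complement formula to the Green function $\bm G(z)=(\bS_n-z)^{-1}$. Writing $G_{ii}$ as
\[
G_{ii}(z) = \frac{1}{-z\bigl(1+t_i\,\underline{m}_n^{(i)}(z)\bigr) - \varepsilon_i(z)},
\]
where $\underline{m}_n^{(i)}$ is the Stieltjes transform of $\mathcal{S}_n$ with the $i$-th row of $\Y_n$ removed and $\varepsilon_i(z)$ collects linear and quadratic forms in $\x_i$, Lemma \ref{large deviation} immediately yields $|\varepsilon_i(z)| \prec \Omega(z)$. Combining this with the minor $\y_i$-free perturbation estimate $|\underline{m}_n^{(i)} - \underline{m}_n| = O_\prec((n\eta)^{-1})$ and inverting via the stability of \eqref{local law_z}, one obtains $\Lambda_d(z) \prec \Omega(z) + \Lambda(z)$. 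A standard continuity argument in $\eta$, starting from the trivial scale $\eta \asymp 1$ where $\Lambda(z) = O(1)$ and descending step by step down to $\eta = n^{-1+\epsilon}$, then closes the estimate. The off-diagonal entries are controlled analogously using \eqref{large deviation1}.

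For the average local law, summing the Schur equations and averaging yields an approximate version of the self-consistent equation \eqref{local law_z} for $\underline{m}_n$. The cancellation gained by averaging, together with the Ward identity $\sum_k |G_{ki}|^2 = \eta^{-1} \Im G_{ii}$, improves the error from $\Omega$ to $(n\eta)^{-1}$ as in Lemma 5.6 of \cite{KAY}. For the rigidity claim, I would use Helffer--Sj\"ostrand functional calculus to convert the average-local-law bound on $\Lambda(z)$ into control on the empirical counting function, and then invert via the square-root behavior of the limiting density $\underline{\rho}_0$ at $E_+$ (guaranteed by \eqref{laxi} and Theorem 3.1 of \cite{Baopanzhou}) to obtain $|\lambda_i(\bS_n)-\gamma_i| \prec n^{-2/3} i^{-1/3}$.

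The main obstacle will be bookkeeping the dependence structure in the large-deviation step at every rung of the continuity bootstrap, since the rows of $\X_n$ are not independent within themselves (sampling without replacement) and Lemma \ref{large deviation} had to be proved by a delicate martingale argument rather than a standard Hanson--Wright bound. Once that piece is in place, the argument transfers uniformly to every intermediate matrix $\bS_l$ of \eqref{local law3}: for $l < p$ the rows not yet replaced satisfy the estimates of Lemma \ref{large deviation}, while for the replaced rows $\tilde{\y}_i$ the same large deviation bounds hold by Corollary B.3 of \cite{ERD} applied to the i.i.d.\ entries of $\bm{\Xi}$ combined with the trivial bounded operator $\bm{\Sigma}^{1/2}$. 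Since the deterministic objects $\T$, $c_n$, and $H_n$ are identical for all $\bS_l$, the self-consistent equation and stability estimates are common, so the conclusion of Lemma \ref{lema2} holds uniformly in $l$, as required for the Green function comparison in the next stage of the proof of Theorem \ref{comparison}.
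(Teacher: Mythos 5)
Your proposal follows essentially the same route the paper takes: establish large deviation estimates for the rows (Lemma \ref{large deviation}), feed them into the standard Schur-complement/bootstrap machinery (which the paper encapsulates by invoking Proposition 5.1 of \cite{Wen} for the weak local law and Lemma 5.6 of \cite{KAY} for the strong local law), then obtain rigidity from the square-root edge behavior of $\underline{\rho}_0$ guaranteed by Theorem 3.1 of \cite{Baopanzhou}. Your treatment of the intermediate matrices $\bS_l$ via Corollary B.3 of \cite{ERD} for the i.i.d.\ rows also matches the paper's remark preceding the lemma statement, so there is no meaningful divergence between your argument and theirs.
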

    The proof of Lemma \ref{lema2} is deferred to Section A.1 of the Supplementary Material (\cite{Hu23}).

    \subsection{Edge universality for $S_n$}\label{univer}
    In this section, we prove the edge universality of ${\bS}_{n}$ by the Green function comparison.

    Recall the intermediate matrices defined in \eqref{local law3} and the notation introduced in \eqref{local law4}. We aim to show the following
    lemma.
    \begin{lemma}\label{comlema1}
    Fix any $\gamma\in[1,\ldots,p]$, and choose $\epsilon$ as any sufficiently small constant. Let $E,E_1,E_2\in R$ satisfy $E_1<E_2$ and
    \begin{eqnarray}\label{comlema11}
|E-E_+|,~|E_1-E_+|,~|E_2-E_+|\leq n^{-\frac{2}{3}+\epsilon},
    \end{eqnarray}
    and set $\eta_0=n^{-\frac{2}{3}-\epsilon}$. Define $F:\mathbb{R}\rightarrow \mathbb{R}$ as a smooth function satisfying
    \begin{eqnarray*}
    \max_{x\in \mathbb{R}}|F^{(l)}(x)|(|x|+1)^{-C}\leq C,~~~l=1,2,3,4
    \end{eqnarray*}
    for some positive constant $C$. Then, there exists a constant $\delta>0$ such that for sufficiently large $n$, we have
    \begin{eqnarray}\label{comlema12}
    |\mathbb EF(n\eta_0{\rm Im}m_{\gamma}(z))
    -\mathbb EF(n\eta_0{\rm Im}m_{\gamma+1}(z))|\prec n^{-1-\delta},~~~~~~~z=E+i\eta_0,
    \end{eqnarray}
    and
    \begin{eqnarray}\label{comlema13}
    |\mathbb EF(n\int_{E_1}^{E_2}{\rm Im}m_{\gamma}(x+i\eta_0){\rm d}x)
    -\mathbb EF(n\int_{E_1}^{E_2}{\rm Im}m_{\gamma+1}(x+i\eta_0){\rm d}x)|\prec n^{-1-\delta}.
    \end{eqnarray}
    \end{lemma}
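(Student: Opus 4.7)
The plan is to execute a Green function comparison at the spectral edge in the style of \cite{ERD,KAY}, swapping the $(\gamma+1)$-th row of $\Y_\gamma$ for its $\bm\Sigma$-corrected i.i.d.\ proxy one step at a time. Because $\Y_\gamma$ and $\Y_{\gamma+1}$ agree outside row $\gamma+1$, a single Sherman-Morrison step isolates the dependence on that row, and a Taylor expansion of $F$ together with the moment matching built into the definition of $\tilde\y$ will give the required $n^{-1-\delta}$ estimate.

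\textbf{Step 1 (isolation via Sherman-Morrison).} Let $\Y^\ast$ denote the common $(p-1)\times n$ matrix obtained by deleting row $\gamma+1$ from either $\Y_\gamma$ or $\Y_{\gamma+1}$, and let $\mathcal{S}^\ast=\Y^{\ast\top}\Y^\ast$, $\mathcal{G}^\ast(z)=(\mathcal{S}^\ast-zI_n)^{-1}$. Writing $\y:=\y_{\gamma+1}$ and $\tilde\y:=\tilde\y_{\gamma+1}$, the Gram-side decomposition $\mathcal{S}_\gamma=\mathcal{S}^\ast+\y^\top\y$ together with the rank-one Sherman-Morrison identity and the relation $m_\gamma(z)=c_n^{-1}n^{-1}\tr\mathcal{G}_\gamma(z)+(c_n^{-1}-1)/z$ yield
\[
m_\gamma(z)-m_{\gamma+1}(z)=\frac{1}{c_nn}\left(\frac{\tilde\y\,\mathcal{G}^{\ast 2}\tilde\y^\top}{1+\tilde\y\,\mathcal{G}^\ast\tilde\y^\top}-\frac{\y\,\mathcal{G}^{\ast 2}\y^\top}{1+\y\,\mathcal{G}^\ast\y^\top}\right).
\]
The comparison is thus reduced to two scalar functionals of the $\Y^\ast$-measurable matrix $\mathcal{G}^\ast$ and the single replaced row.

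\textbf{Step 2 (moment matching and Taylor expansion).} The construction $\tilde\y=\sqrt{t_{\gamma+1}}\bm\xi_{\gamma+1}\bm\Sigma^{1/2}$ with $\bm\Sigma=(n\bm I-\e_n\e_n^\top)/(n-1)$ is engineered so that, using $\sum_k v_{ik}=0$, $n^{-1}\sum_k v_{ik}^2=1$, and the second-moment computation from the proof of Theorem \ref{th1MP},
\[
\E[\y^\top\y\mid\Y^\ast]=\E[\tilde\y^\top\tilde\y\mid\Y^\ast]=(t_{\gamma+1}/n)\bm\Sigma,
\]
so $\E[\y A\y^\top\mid\Y^\ast]=\E[\tilde\y A\tilde\y^\top\mid\Y^\ast]$ for any deterministic $A$. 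The centered fluctuations of these quadratic forms are controlled by Lemma \ref{large deviation} (and the corresponding i.i.d.\ large-deviation estimates for $\tilde\y$ noted after \eqref{large deviation8}), and by the fourth-moment computations $\E w_{i1}w_{i2}w_{i3}w_{i4}$, $\E(w_{i1}^2-\E w_{i1}^2)(w_{i2}^2-\E w_{i2}^2)$ given in the proof of Theorem \ref{th1MP}, the joint fourth moments of $\y$ and $\tilde\y$ agree up to an $O(1/n)$ correction built from the common population $\bv_{\gamma+1}$. Expanding $F$ to fourth order around the $\Y^\ast$-measurable anchor $X:=n\eta_0\,{\rm Im}\bigl(c_n^{-1}n^{-1}\tr\mathcal{G}^\ast(z)+(c_n^{-1}-1)/z\bigr)$, conditioning on $\Y^\ast$, and using a geometric-series expansion of the Sherman-Morrison ratio in powers of $\y\mathcal{G}^\ast\y^\top-\E[\cdot\mid\Y^\ast]$, one finds that the $k=1$ Taylor term cancels exactly (by second-moment matching) and the $k=2,3$ terms cancel up to an $O(1/n)$ residue. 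Bounding the $k=3,4$ remainders using the local-law estimates from Lemma \ref{lema2} produces an overall estimate of order $n^{-1-\delta}$ for $\epsilon$ and $\delta$ sufficiently small.

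\textbf{Main obstacle and integrated version.} The delicate point is the second-moment matching at the $k=2$ Taylor level: a naive i.i.d.\ substitute for $\y$ would leave a discrepancy of order $1/n$ stemming from the off-diagonal covariance $\E x_{ij}x_{ik}=-1/(n(n-1))$ produced by the without-replacement constraint $\sum_k x_{ik}=0$, and this discrepancy, once inserted into the squared Sherman-Morrison expansion, would produce an $O(n^{-2/3})$ contribution that spoils the bound. The factor $\bm\Sigma$ in the proxy absorbs exactly this off-diagonal structure, and the three parts of Lemma \ref{large deviation} --- bounds \eqref{large deviation1}, \eqref{large deviation2}, \eqref{large deviation3} --- must be applied separately to linear, diagonal-quadratic, and off-diagonal-quadratic pieces of the expansion to see that the cancellation is clean. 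Once \eqref{comlema12} is established in this way, \eqref{comlema13} follows from the identical argument applied to $n\int_{E_1}^{E_2}{\rm Im}\,m_\gamma(x+i\eta_0)\,{\rm d}x$ in place of $n\eta_0\,{\rm Im}\,m_\gamma(z)$, since the $\y$-dependence enters only through the same scalar Sherman-Morrison ratio integrated against a deterministic measure on $[E_1,E_2]$.
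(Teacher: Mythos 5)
Your proposal follows the same overall route the paper outlines in Section 4.1 (row-by-row replacement $\Y_\gamma\to\Y_{\gamma+1}$, rank-one Sherman-Morrison isolation of the swapped row, moment matching of the quadratic forms $\y\mathcal{G}^\ast\y^\top$ and $\y\mathcal{G}^{\ast 2}\y^\top$ against those of $\tilde\y$, Taylor expansion of $F$ around a $\Y^\ast$-measurable anchor, and control of the remainders via the strong local law of Lemma \ref{lema2}), and you correctly identify the reason for the $\bm\Sigma^{1/2}$ correction in the i.i.d.\ proxy --- the exact second-moment matching $\E[\y^\top\y\mid\Y^\ast]=\E[\tilde\y^\top\tilde\y\mid\Y^\ast]=(t_{\gamma+1}/n)\bm\Sigma$ that kills the leading without-replacement bias. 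One clarification you should make when writing this out: the $k=1$ Taylor term does not cancel ``exactly'' by second-moment matching; only the leading term of $\E[Y_\gamma-Y_{\gamma+1}\mid\Y^\ast]$ cancels (the one given by $\bar b/(1+\bar a)=\tilde{\bar b}/(1+\tilde{\bar a})$), while the sub-leading contributions from the geometric-series expansion involve the fourth moments of $\y$ versus $\tilde\y$ and must be shown small via the explicit mixed-moment formulas you cite together with the entrywise local law to control $\sum_{ijkl}\mathcal{G}^\ast_{ij}\mathcal{G}^\ast_{kl}$-weighted sums --- this bookkeeping is where the $n^{-1-\delta}$ bound is actually earned, so it deserves more than a parenthetical.
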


   The proof of Lemma \ref{comlema1} can be found in Section A.2 of the Supplementary Material (\cite{Hu23}). Using Lemma \ref{comlema1}, we can now prove Theorem \ref{comparison}.
   \begin{proof}[Proof of Theorem \ref{comparison}]
     Since the proof of Theorem \ref{comparison} is similar to that of Theorem 3.3 in \cite{Wen}, we only outline the general strategy here. First, we attempt to convert the problem to a comparison of the Stieltjes transform. As a result,
    for any $\eta>0$, we define
	\[
	\vartheta_{\eta}(x)=\frac{\eta}{\pi(x^{2}+\eta^{2})}=\frac{1}{\pi}\operatorname{Im}\frac{1}{x-i\eta}.
	\]
	We notice that for any $a,b\in\mathbb{R}$ with $a\le b$, the convolution
	of $\mathbf{1}_{[a,b]}$ and $\vartheta_{\eta}$ applied to the eigenvalues
	$\lambda_{i}$, $i=1,\dots,p$ yields that
	\[
	\sum_{i=1}^{p}\mathbf{1}_{[a,b]}*\vartheta_{\eta}(\lambda_{i})=\frac{p}{\pi}\int_{a}^{b}\operatorname{Im} m_{n}(x+i\eta){\rm d}x.
	\]
	In terms of functional calculus notation, we have
	\begin{eqnarray*}
		\sum_{i=1}^{p}\mathbf{1}_{[a,b]}(\lambda_{i}) =  {\rm Tr}\mathbf{1}_{[a,b]}({\bS}_n),\quad
		\sum_{i=1}^{p}\mathbf{1}_{[a,b]}*\vartheta_{\eta}(\lambda_{i})  =  {\rm Tr}\mathbf{1}_{[a,b]}*\vartheta_{\eta}({\bS}_n).
	\end{eqnarray*}
	For $a,b\in\mathbb{R}\cup\{-\infty,\infty\}$, define ${\cal N}(a,b)=\#(a\leq \lambda_j \leq b)$ as the number of eigenvalues of ${\bS}_n$ in $[a,b]$. Then, we have the following lemma.

	\begin{lemma}\label{lem:smooth approximation of indicator}(Lemma 4.1 in \cite{Pillai and Yin})
	Let $\varepsilon>0$
	be an arbitrarily small number. Set $E_{\varepsilon}=E_{+}+n^{-2/3+\varepsilon}$,
	$\ell_{1}=n^{-2/3-3\varepsilon}$ and $\eta_{1}=n^{-2/3-9\varepsilon}$.
	Then, for any $E$ satisfying $|E-E_{+}|\le\frac{3}{2}n^{-2/3+\varepsilon}$, it holds with high probability that
	\[|{\rm Tr}\mathbf{1}_{[E,E_{\varepsilon}]}({\bS}_n)-{\rm Tr}\mathbf{1}_{[E,E_{\varepsilon}]}*\vartheta_{\eta}({\bS}_n)|\le C(n^{-2\varepsilon}+{\cal N}(E-\ell_{1},E+\ell_{1})).\]
	\end{lemma}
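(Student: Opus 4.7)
The plan is to attack the statement pointwise on the spectrum by exploiting the closed form
\[
(\mathbf{1}_{[E,E_\varepsilon]}*\vartheta_{\eta_1})(\lambda) \;=\; \frac{1}{\pi}\Bigl[\arctan\tfrac{E_\varepsilon-\lambda}{\eta_1}-\arctan\tfrac{E-\lambda}{\eta_1}\Bigr],
\]
which turns the pointwise error $|\mathbf{1}_{[E,E_\varepsilon]}(\lambda)-(\mathbf{1}_{[E,E_\varepsilon]}*\vartheta_{\eta_1})(\lambda)|$ into an explicit function of $\lambda$: it is at most $1$ in the boundary zones $|\lambda-E|\le\ell_1$ and $|\lambda-E_\varepsilon|\le\ell_1$; it is of order $\eta_1/\mathrm{dist}(\lambda,\{E,E_\varepsilon\})$ when $\lambda$ lies strictly inside $[E,E_\varepsilon]$ but at distance $>\ell_1$ from both endpoints; and it is of order $\eta_1(E_\varepsilon-E)/\mathrm{dist}(\lambda,[E,E_\varepsilon])^{2}$ when $\lambda$ is outside the interval at distance $>\ell_1$. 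Summing over the eigenvalues $\lambda_i(\bS_n)$ then splits along these three regions.

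I would dispatch the two boundary zones first. The eigenvalues in $[E-\ell_1,E+\ell_1]$ contribute at most $\mathcal{N}(E-\ell_1,E+\ell_1)$, which is exactly the first term absorbed into the right-hand side. The eigenvalues in $[E_\varepsilon-\ell_1,E_\varepsilon+\ell_1]$ are empty with high probability: since $E_\varepsilon-\ell_1=E_+ + n^{-2/3+\varepsilon}(1-o(1))$, this window sits strictly above the rightmost classical location $\gamma_1$ by a gap of order $n^{-2/3+\varepsilon}\gg n^{-2/3}$, so the edge rigidity $|\lambda_1(\bS_n)-\gamma_1|\prec n^{-2/3}$ supplied by Lemma \ref{lema2}(3) forces $\lambda_1(\bS_n)<E_\varepsilon-\ell_1$ except on an event of negligible probability.

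The main obstacle is the remaining bulk tail contribution, for which a uniform per-eigenvalue bound $\eta_1/\ell_1=n^{-6\varepsilon}$ multiplied by $O(n)$ indices is hopelessly too crude. To handle it, I would dyadically partition $\{i:|\lambda_i-E|\ge\ell_1\}$ into shells $\mathcal{S}_k=\{i:2^k\ell_1\le|\lambda_i-E|<2^{k+1}\ell_1\}$ and use rigidity again to count their populations. Because $E$ lies within $n^{-2/3+\varepsilon}$ of the soft edge $E_+$, the square-root MP density yields $|\mathcal{S}_k|\prec p(2^k\ell_1)^{3/2}$ for shells still in the edge regime and $|\mathcal{S}_k|\prec p(2^k\ell_1)$ once the shell enters the bulk. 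Inside the interval, summing $|\mathcal{S}_k|\cdot\eta_1/(2^k\ell_1)$ over the shells with $\ell_1\le 2^k\ell_1\le E_\varepsilon-E$ gives a total of order $p\eta_1\sqrt{E_\varepsilon-E}\,\log n$; outside the interval, summing $|\mathcal{S}_k|\cdot\eta_1(E_\varepsilon-E)/(2^k\ell_1)^2$ is dominated by the innermost shell and gives a total of order $p\eta_1(E_\varepsilon-E)/\sqrt{\ell_1}$. With the prescribed scales $\eta_1=n^{-2/3-9\varepsilon}$, $\ell_1=n^{-2/3-3\varepsilon}$, and $E_\varepsilon-E\le 3n^{-2/3+\varepsilon}$, both expressions are of the form $n^{-c\varepsilon}$ with $c>2$, hence comfortably $o(n^{-2\varepsilon})$. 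Collecting the three pieces closes the estimate with high probability.
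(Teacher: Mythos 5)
The paper does not prove this lemma itself; it is quoted verbatim from Lemma 4.1 of \cite{Pillai and Yin}, so the yardstick is the standard argument from that line of work, which your proof does reproduce in spirit. The pointwise error formula via arctangent, the split into boundary-window / inside-interval / outside-tail, and the invocation of rigidity to kill the $[E_\varepsilon-\ell_1,E_\varepsilon+\ell_1]$ window are all correct and are essentially what the cited proof does.

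However, there is a genuine gap in the shell-counting step. You claim $|\mathcal{S}_k|\prec p(2^k\ell_1)^{3/2}$ ``by rigidity.'' But Lemma \ref{lema2}(3) gives $|\lambda_i-\gamma_i|\prec n^{-2/3}i^{-1/3}$, so even for the most rigid edge eigenvalues the location precision is $\gtrsim n^{-2/3-\varepsilon/2}$, which dwarfs the shell width $2^k\ell_1\asymp n^{-2/3-3\varepsilon+k\log 2/\log n}$ for the first several values of $k$. Rigidity simply cannot resolve eigenvalue counts in windows narrower than $\sim n^{-2/3}$; all it tells you is that the \emph{total} count of eigenvalues within distance $\sim n^{-2/3+\epsilon'}$ of $E$ is $\prec p\sqrt{|E_+-E|+n^{-2/3+\epsilon'}}\cdot n^{-2/3+\epsilon'}\lesssim n^{\varepsilon/2+O(\epsilon')}$, not how they split across the sub-$n^{-2/3}$ shells. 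Your per-shell population bound is therefore unjustified in exactly the regime ($k$ small) where you need it most.

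The estimate still closes, but for a different reason that your write-up does not notice: your per-eigenvalue bound $\eta_1(E_\varepsilon-E)/\mathrm{dist}^2$ is not tight when $\mathrm{dist}<E_\varepsilon-E$. Writing $d=E-\lambda$ and $\Delta=E_\varepsilon-E$, the actual error outside the interval is
\[
\frac{1}{\pi}\Bigl(\arctan\tfrac{\eta_1}{d}-\arctan\tfrac{\eta_1}{d+\Delta}\Bigr)\;\lesssim\;\frac{\eta_1\Delta}{d(d+\Delta)}\;\le\;\min\Bigl(\frac{\eta_1}{d},\,\frac{\eta_1\Delta}{d^{2}}\Bigr),
\]
so for $\ell_1\le d\lesssim\Delta$ the correct per-eigenvalue bound is $\eta_1/d\le\eta_1/\ell_1=n^{-6\varepsilon}$, an order of magnitude sharper than the $n^{-2\varepsilon}$ your formula would give at $d=\ell_1$. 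Pairing this sharper bound with the genuine rigidity count $\prec n^{\varepsilon/2}$ over the whole $O(n^{-2/3})$-neighborhood of $E$ yields a contribution $\prec n^{\varepsilon/2-6\varepsilon}=n^{-5.5\varepsilon}\ll n^{-2\varepsilon}$; shells beyond distance $\Delta$ are then handled by your dyadic argument with the $3/2$-power counting (which \emph{is} valid there, since those widths exceed $n^{-2/3}$). In short: the conclusion is right, but as written the proof overstates what rigidity delivers, and the compensating improvement in the pointwise error bound — which is what actually saves the argument — is missing. This should be repaired before the derivation is trustworthy.
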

	Lemma \ref{lem:smooth approximation of indicator} shows that ${\rm Tr}\mathbf{1}_{[a,b]}({\bS}_n)$ can be well approximated by its smoothed version ${\rm Tr}\mathbf{1}_{[a,b]}*\vartheta_{\eta}({\bS}_n)$ for $a,b$ around edge $E_{+}$ so that the problem can be converted to a comparison of the Stieltjes transform.
	
    Then, from Lemma \ref{comlema1}, Lemma \ref{lem:smooth approximation of indicator}, the square root behavior of $\underline{\rho}_0$ (see Section \ref{notionnn}), and the rigidity on the right edge (see Lemma \ref{lema2}), one can see that
    \begin{align}\label{comp5}
    \nonumber&\mathbb P\left(n^{2/3}(\lambda_1(\tilde{\bS})-E_+)\leq s-n^{-\varepsilon}\right)-n^{-\delta}\leq\mathbb P\left(n^{2/3}(\lambda_1({\bS}_n)-E_+)\leq s\right)\\
    \leq&\mathbb P\left(n^{2/3}(\lambda_1(\tilde{\bS})-E_+)\leq s+n^{-\varepsilon}\right)+n^{-\delta},
    \end{align}
    where $\tilde{\bS}$ is defined in \eqref{large deviation7}. 
    It is known from the eigenvalue sticking result (Theorem 2.7 of \cite{Blo} 
    and Section 11 in \cite{KAY}) that the edge universality for  $\tilde{\bS}$ holds. Specifically,
    \begin{align}\label{comp6}
    \nonumber&\mathbb P\left(n^{2/3}(\lambda_1(\Q)-E_+)\leq s-n^{-\varepsilon}\right)-n^{-\delta}\leq\mathbb P\left(n^{2/3}(\lambda_1(\tilde{\bS})-E_+)\leq s\right)\\
    \leq&\mathbb P\left(n^{2/3}(\lambda_1(\Q)-E_+)\leq s+n^{-\varepsilon}\right)+n^{-\delta},
    \end{align}
    where $\Q$ is defined in Theorem \ref{comparison}. Combining \eqref{comp5} and \eqref{comp6}, we can conclude the proof of Theorem \ref{comparison}.
   \end{proof}

\section{Conclusion and discussion}\label{conclusion}
In this paper, we focus on the eigenvalues of high-dimensional sample covariance matrices generated without replacement from finite populations. The largest eigenvalues are proven to follow the TW law. Based on the permutation method introduced by \cite{DE}, we consider an analogous strategy to compare the top singular
values of $\B$ and a certain percentile of the top singular values of its permuted matrices after centralization, where the latter can be attributed to our theoretical results. 

Technically, for the PA, one might be more interested in the asymptotic distribution of $\lambda_1(\frac{1}{n}(\B)_{\pi}(\B)_{\pi}^{\top})$, that is the largest eigenvalue of permuted sample covariance matrices without centralization. We believe that it would still tend to the TW law as $n,~p\to \infty$ by adding some more restrictions on $\bS$. However, it requires more technical work, thus we decide to leave it for future work.  Besides, \cite{DO} proposed derandomized PA (DPA), deflated DPA (DDPA) and DDPA+ methods,  which 
are faster and more reproducibly than the PA method.  
Essentially, our theoretical results can explain why the DPA method is a kind of parallel analysis from another perspective. For DDPA and DDPA+ methods,  both counter the shadowing problem of DPA by removing some largest factors one by one. We think that the idea of removing the poorly estimated singular vectors can also improve the accuracy of the quantiles of the TW distribution in practice. We will also leave the rigorous discussion of this property for future work.






\section*{ Acknowledgments}
The authors would like to thank Zhigang Bao for allowing them to use the method in his unpublished work \cite{Baospe}. We also thank the editor, AE and referee for their careful reading and invaluable comments, which have greatly improved the quality of the paper. 


\section*{Funding}
Jiang Hu's research was supported by NSFC Nos.\ 12171078, 11971097, 12292980, 12292982. Shaochen Wang's research was supported by NSFC No.\ 11801181,
Guangdong Basic and Applied Basic Research Foundation No.\ 2018A030310358. 
Yangchun Zhang's research was supported by Shanghai Sailing Program 21YF1413500. 
Wang Zhou's research was partially supported by a grant A-0004803-00-00 at the National University of Singapore.

\end{document}